\crefname{theorem}{Theorem}{Theorems}
\crefname{lemma}{Lemma}{Lemmas}
\crefname{corollary}{Corollary}{Corollaries}
\crefname{proposition}{Proposition}{Propositions}
\crefname{conjecture}{Conjecture}{Conjectures}
\crefname{question}{Question}{Questions}
\crefname{definition}{Definition}{Definitions}
\crefname{example}{Example}{Examples}
\crefname{remark}{Remark}{Remarks}
\crefname{question}{Question}{Questions}
\crefname{enumi}{}{}
\crefname{equation}{}{}
\numberwithin{equation}{section}
\numberwithin{figure}{section}
  \theoremstyle{plain}
  \newtheorem{theorem}{\protect\theoremname}[section]
  \newtheorem{lemma}[theorem]{\protect\lemmaname}
  \newtheorem{proposition}[theorem]{\protect\propositionname}
  \newtheorem{corollary}[theorem]{\protect\corollaryname}
	\theoremstyle{definition}
  \newtheorem{definition}[theorem]{\protect\definitionname}
	\theoremstyle{remark}
  \newtheorem{remark}[theorem]{\protect\remarkname}
  \theoremstyle{plain}
	\newtheorem*{theorem*}{\protect\theoremname}
  \newtheorem*{lemma*}{\protect\lemmaname}
  \newtheorem*{proposition*}{\protect\propositionname}
  \newtheorem*{corollary*}{\protect\corollaryname}
	\newtheorem{conjecture*}{\protect\conjecturename}
	\newtheorem{question*}{\protect\questionname}
	\theoremstyle{definition}
  \newtheorem*{definition*}{\protect\definitionname}
  \theoremstyle{remark}
  \newtheorem*{remark*}{\protect\remarkname}
	\providecommand{\theoremname}{Theorem}
  \providecommand{\lemmaname}{Lemma}
  \providecommand{\propositionname}{Proposition}
  \providecommand{\corollaryname}{Corollary}
  \providecommand{\conjecturename}{Conjecture}
	\providecommand{\questionname}{Question}
	\providecommand{\definitionname}{Definition}
  \providecommand{\remarkname}{Remark}
\global\long\def\dee{\mathrm{d}}
\DeclareFontFamily{U}{MnSymbolC}{}
\DeclareSymbolFont{MnSyC}{U}{MnSymbolC}{m}{n}
\DeclareFontShape{U}{MnSymbolC}{m}{n}{
    <-6>  MnSymbolC5
   <6-7>  MnSymbolC6
   <7-8>  MnSymbolC7
   <8-9>  MnSymbolC8
   <9-10> MnSymbolC9
  <10-12> MnSymbolC10
  <12->   MnSymbolC12}{}
\DeclareMathSymbol{\intprod}{\mathbin}{MnSyC}{'270}
\global\long\def\cE{\mathcal{E}}
\global\long\def\ka{\mbox{\large{$\kappa$}}}
\global\long\def\oneb{\bar{1}}
\global\long\def\Ab{\bar{A}}
\global\long\def\Bb{\bar{B}}
\global\long\def\thetah{\hat{\theta}}
\global\long\def\btheta{\boldsymbol{\theta}}
\global\long\def\bh{\boldsymbol{h}}
\global\long\def\Levi{\boldsymbol{L}}
\global\long\def\ups{\upsilon}
\global\long\def\Ups{\Upsilon}
\global\long\def\scrK{\mathscr{K}}
\global\long\def\fg{\mathfrak{g}}
\global\long\def\cT{\mathcal{T}}
\global\long\def\cA{\mathcal{A}}
\global\long\def\cG{\mathcal{G}}
\global\long\def\hook{\lrcorner\,}
\def\sideremark#1{\ifvmode\leavevmode\fi\vadjust{\vbox to0pt{\vss
 \hbox to 0pt{\hskip\hsize\hskip1em
 \vbox{\hsize3cm\tiny\raggedright\pretolerance10000
  \noindent #1\hfill}\hss}\vbox to8pt{\vfil}\vss}}}%
\begin{document}

\title[]{Bounded strictly pseudoconvex domains in $\mathbb{C}^2$ \\ with obstruction flat boundary II}

\author{Sean N.\ Curry}
\author{Peter Ebenfelt}

\thanks{The second author was supported in part by the NSF grant DMS-1600701.}

\begin{abstract}
On a bounded strictly pseudoconvex domain in $\mathbb{C}^n$, $n>1$, the smoothness of the Cheng-Yau solution to Fefferman's complex Monge-Ampere equation up to the boundary is obstructed by a local CR invariant of the boundary. For a bounded strictly pseudoconvex domain $\Omega\subset \mathbb{C}^2$ diffeomorphic to the ball, we prove that the global vanishing of this obstruction implies biholomorphic equivalence to the unit ball, subject to the existence of a holomorphic vector field satisfying a mild approximate tangency condition along the boundary. In particular, by considering the Euler vector field multiplied by $i$ the result applies to all domains in a large $C^1$ open neighborhood of the unit ball in $\mathbb{C}^2$. The proof rests on establishing an integral identity involving the CR curvature of $\partial \Omega$ for any holomorphic vector field defined in a neighborhood of the boundary. The notion of ambient holomorphic vector field along the CR boundary generalizes naturally to the abstract setting, and the corresponding integral identity still holds in the case of abstract CR $3$-manifolds.
\end{abstract}
\subjclass[2010]{Primary 32V15, 32T15; Secondary 32H02, 32W20}

\maketitle

\section{Introduction}

In this paper we continue our study of compact obstruction flat CR $3$-manifolds, begun in \cite{CE2018-obstruction-flatI}. We recall that if $\Omega\subset \mathbb{C}^n$, $n>1$, is a bounded strictly pseudoconvex domain with smooth boundary $M=\partial \Omega$, then by \cite{ChengYau1980} there is a unique solution $u>0$ to the Dirichlet problem
\begin{equation} \label{eqn:FeffermanComplexMongeAmpere}
\left\{ \begin{array}{l}
\mathcal{J}(u):= (-1)^n \,\mathrm{det} \left( \begin{array}{ c c} u & u_{z^{\bar{k}}}\\ u_{z^{j}} & u_{z^{j}z^{\bar{k}}}\end{array} \right) = 1\;\,\mathrm{in}\;\, \Omega,\\
u=0 \;\,\mathrm{on}\;\, \partial \Omega
\end{array}\right.
\end{equation}
governing the existence of a complete K\"ahler-Einstein metric on $\Omega$ with K\"ahler potential $-\log\, u$.
Cheng and Yau \cite{ChengYau1980} proved that the solution $u$ exists and is smooth in $\Omega$ and $C^{n+\frac{3}{2}-\epsilon}$ up to the boundary, for any $\epsilon >0$. Lee and Melrose \cite{LeeMelrose1982} then showed that $u$ is $C^{n+2-\epsilon}$ up to the boundary by showing that $u$ has asymptotic expansion
\begin{equation}\label{eqn:Lee-Melrose-asymptotics}
u \sim \rho \sum_{k=0}^{\infty} \eta_k (\rho^{n+1}\log \rho)^k, \quad \eta_k \in C^{\infty}(\overline{\Omega})
\end{equation}
where $\rho$ is a Fefferman defining function for $\Omega$, meaning $\rho$ is normalized by $\mathcal{J}(\rho)=1+O(\rho^{n+1})$; see \cite{Fefferman1976}. 
Such a defining function is unique modulo $O(\rho^{n+2})$ and Graham \cite{Graham1987a, Graham1987b} showed that the coefficients $\eta_k$ are local CR invariants modulo $O(\rho^{n+1})$. 
Graham also showed that if $b\eta_1=\eta_1|_{M}$ vanishes, then $\eta_k = 0$ for all $k\geq 1$, and $u$ is smooth up to $M$. Thus $b\eta_1$ is precisely the obstruction to $C^{\infty}$ boundary regularity of the Cheng-Yau solution $u$. 
We say that a strictly pseudoconvex CR manifold is \emph{obstruction flat} if $b\eta_1=0$. 
Graham \cite{Graham1987a, Graham1987b} showed that there is a large family of local (noncompact) strictly pseudoconvex obstruction flat hypersurfaces in $\mathbb{C}^n$, not locally CR equivalent to the unit sphere (not \emph{locally spherical}). In this paper we shall consider the problem of classifying the \emph{compact} strictly pseudoconvex obstruction flat hypersurfaces in $\mathbb{C}^2$. In this case the problem coincides with that of classifying the smooth bounded strictly pseudoconvex domains $\Omega$ for which the trace of the log term in the asymptotic expansion of the Bergman kernel vanishes on $M=\partial\Omega$ \cite{Graham1987b}. In this formulation the problem is subject to a strong form of the classical Ramadanov conjecture \cite{Ramadanov1981}; recall that the Ramadanov conjecture states that the full vanishing of the coefficient of the log term (not just of the trace on the boundary) characterizes the unit ball in $\mathbb{C}^n$. The reader is referred to \cite[Section 2]{CE2018-obstruction-flatI} for more detail. The problem is also equivalent to classifying the compact strictly pseudoconvex hypersurfaces in $\mathbb{C}^2$ that are critical points for the Burns-Epstein invariant, or equivalently for the total $Q'$-curvature, among deformations in $\mathbb{C}^2$ (Kuranishi wiggles); see \cite{CaseYang2013, ChengLee1995, CE2018-obstruction-flatI, Hirachi2014, HirachiMarugameMatsumoto2017}.

In \cite{CE2018-obstruction-flatI} the authors proved that if a compact strictly pseudoconvex CR $3$-manifold with infinitesimal symmetry is obstruction flat then it must be locally spherical, generalizing previous results in \cite{BoichuCoeure1983,Nakazawa1994,Ebenfelt-arxiv2016}. For the case of boundaries in $\mathbb{C}^2$, having an infinitesimal symmetry means the existence of a holomorphic vector field whose real part is tangent to the boundary. In this paper we generalize the aforementioned result of \cite{CE2018-obstruction-flatI} in the embedded case, by substantially relaxing the condition on the tangency of the real part of the holomorphic vector field. 

The restriction of a holomorphic vector field $X$ to a strictly pseudoconvex hypersurface $M$ in $\mathbb{C}^2$ is determined by a single complex function (or density) $u$ given by $u=\overline{iX\rho}$ where $\rho$ is a defining function for $M$; the function $\bar{u}=iX\rho$ is a kind of Hamiltonian potential for $X$ (see \cref{lem:solution-from-holomorphic-vf}). The holomorphic vector field $X$ gives rise to an infinitesimal symmetry of $M$ precisely when $u$ is real, which means that the real part of $X$ is tangent to $M$.  For our main result, we shall need a much weaker condition on $u$; we merely require that the imaginary part of $u$ is not too large relative to the real part. We make the following definition. 
\begin{definition}
Let $M$ be a smooth hypersurface in $\mathbb{C}^2$, and let $\rho$ be any defining for $M$ in a neighborhood of $p\in M$. Let $\epsilon \geq 0$. We say that the real part of a $(1,0)$-vector field $X$ is \emph{$\epsilon$-approximately tangent} to $M$ at $p$ if $u=\overline{iX\rho}$ satisfies
$$
(\mathrm{Im}\,u)^2 \leq \epsilon \,(\mathrm{Re}\,u)^2
$$
at $p$. We say that $\mathrm{Re}\,X$ is \emph{strictly $\epsilon$-approximately tangent} at $p$ if the above inequality is strict.
\end{definition}
\begin{remark} (i) It is easy to see that the notion of $\epsilon$-approximate tangency does not depend on the choice of defining function $\rho$.\\
(ii) Note that if $X$ is a $(1,0)$-vector field, then $\mathrm{Re}\, X$ is tangent to $M$ if and only if $\mathrm{Re}\, X$ is $0$-approximately tangent to $M$.\\
(iii) If $u(p)\neq 0$, then strict $\epsilon$-approximate tangency means $|\mathrm{arg}\,u|<\alpha$ or $|\mathrm{arg}\,{(-u)}|<\alpha$ at $p$ with $\alpha=\arctan\sqrt{\epsilon}$; a similar condition was introduced by Barrett in his study of Sobolev regularity for the Bergman projection on bounded domains \cite{Barrett1986}.
\end{remark}

\begin{theorem}\label{thm:main-theorem1}
Let $\Omega\subset \mathbb{C}^2$ be a smooth bounded strictly pseudoconvex domain for which there exists a holomorphic vector field $X$ on $\overline{\Omega}$ whose real part is strictly $1$-approximately tangent to $\partial \Omega$ almost everywhere.
If $\partial \Omega$ is obstruction flat, then $\partial \Omega$ is locally spherical. 
\end{theorem}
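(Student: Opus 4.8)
The plan is to reduce \cref{thm:main-theorem1} to an integral identity, advertised in the abstract, that relates the CR curvature of $M=\partial\Omega$ to the potential $u=\overline{iX\rho}$ of the given holomorphic vector field $X$. Concretely, I would first invoke \cref{lem:solution-from-holomorphic-vf} to encode $X$ along $M$ by the single complex density $u$, and translate obstruction flatness into the statement that the obstruction density $b\eta_1$ vanishes identically on $M$. The next step is to derive, for an arbitrary ambient holomorphic vector field, a global integral identity of the schematic form
\[
\int_M \bigl(\text{CR curvature quantity}\bigr)\cdot |u|^2 \,\theta\wedge d\theta \;=\; \int_M \bigl(\text{nonnegative expression in the derivatives of }u\bigr)\,\theta\wedge d\theta,
\]
obtained by pairing the transformation law for $b\eta_1$ (equivalently, the CR-covariant linearization governing deformations) against $u$ and integrating by parts on the closed $3$-manifold $M$; the holomorphy of $X$ in a full neighborhood of $\overline{\Omega}$ is what makes the relevant $\bar\partial_b$-type error terms vanish or telescope. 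Here the key CR curvature quantity is built from the Cartan/Chern--Moser tensor $S$ (the single scalar CR curvature invariant in dimension $3$), so that sphericity is exactly the vanishing of this quantity.

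With the identity in hand, the argument is a positivity/sign analysis. Obstruction flatness kills the left-hand side outright (or reduces it to a manifestly signed multiple of $\int_M |S|^2\,|u|^2$), while the right-hand side is a sum of squared CR derivatives of $u$ weighted by curvature terms whose sign is controlled precisely by the hypothesis that $\mathrm{Re}\,X$ is strictly $1$-approximately tangent, i.e.\ $(\mathrm{Im}\,u)^2 < (\mathrm{Re}\,u)^2$, so that $\mathrm{Re}(u^2) = (\mathrm{Re}\,u)^2 - (\mathrm{Im}\,u)^2 > 0$ almost everywhere on $M$. The threshold $\epsilon=1$ is exactly the borderline at which the quadratic form $\mathrm{Re}(u^2)$ changes sign, which is why it appears in the statement rather than some other constant. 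Forcing both sides to vanish then yields $S\,u \equiv 0$ on $M$ together with the vanishing of certain derivatives of $u$; since $u$ is the potential of a holomorphic vector field and cannot vanish on an open subset of $M$ without forcing $X\equiv 0$ (which is excluded, as then $u\equiv 0$ contradicts strict tangency a.e.), we conclude $S\equiv 0$, i.e.\ $M$ is locally spherical.

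The step I expect to be the main obstacle is establishing the integral identity itself with the correct signs and with honest control of all boundary/error terms. This requires: (a) an explicit formula for how $b\eta_1$ transforms under the infinitesimal deformation of $M$ induced by $X$, which is a nontrivial CR-invariant computation involving the Tanaka--Webster connection, the transverse curvature, and the pseudohermitian torsion; (b) a careful integration by parts on $M$ in which the holomorphy of $X$ off $M$ (not merely tangency along $M$) is used to cancel terms that would otherwise obstruct a clean identity; and (c) verifying that the resulting quadratic form in $u$ and its derivatives is genuinely nonnegative under the $\epsilon=1$ hypothesis, which may require completing the square and absorbing cross terms. A secondary technical point is justifying that all integrations by parts are valid, i.e.\ that $u$ and the curvature of $M$ are smooth enough and that the a.e.\ tangency hypothesis is compatible with the pointwise sign conclusions (an approximation or density argument may be needed to pass from "strictly $1$-approximately tangent a.e." to the strict inequality $\mathrm{Re}(u^2)>0$ on a full-measure set where the identity's integrand is controlled). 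Once these are in place, the deduction of local sphericity from the vanishing of $Su$ and the non-vanishing of $u$ on a dense set is immediate, and the remark about the Euler field $iE$ (for which $u = \overline{iX\rho}$ has $\mathrm{Re}\,u$ bounded below near the sphere) furnishes the claimed $C^1$-open neighborhood of the ball.
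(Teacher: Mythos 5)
Your high-level architecture matches the paper's: reduce to the density $u=\overline{iX\rho}$ via \cref{lem:solution-from-holomorphic-vf}, prove a global integral identity on the closed manifold $M=\partial\Omega$, and observe that strict $1$-approximate tangency is exactly $\mathrm{Re}(u^2)>0$ almost everywhere, which is why $\epsilon=1$ is the threshold. But the identity you propose has the wrong shape, and this is not cosmetic. You write it with the weight $|u|^2$ and a ``nonnegative expression in derivatives of $u$'' on the right; if the left-hand side were a signed multiple of $\int_M|S|^2|u|^2$, the hypothesis on $u$ would be superfluous (since $|u|^2\ge 0$ always) and the theorem would hold with no tangency condition at all, which is not what is claimed. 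The identity actually needed (\cref{thm:main-technical-theorem}) is $\int_M u^2|Q|^2=0$ with the \emph{complex} weight $u^2$ and right-hand side exactly zero; the tangency hypothesis enters only afterwards, when one takes real parts and uses $\mathrm{Re}(u^2)>0$ a.e.\ to force $|Q|^2\equiv0$ (\cref{thm:main-theorem2}). Your closing unique-continuation worry about $u$ vanishing on open sets is then unnecessary: the a.e.\ strict inequality already rules out $u=0$ on a set of positive measure.

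The genuine gap is in the step you yourself flag as the main obstacle: producing the identity. Pairing the transformation law of $b\eta_1$ (equivalently the formula $\mathcal{O}=\tfrac13(\nabla^1\nabla^1Q_{11}-iA^{11}Q_{11})$, whose leading operator is the formal adjoint of $\nabla_1\nabla_1+iA_{11}$) against $u$ and integrating by parts only yields $\int_M u\,\mathcal{O}=0$, which is linear in the curvature and vacuous under obstruction flatness; it cannot produce the quadratic quantity $u^2|Q|^2$. The missing ingredient is a prolongation: from the second-order equation $\nabla_1\nabla_1u+iA_{11}u=0$ one must build a trace-free section $s_A{}^B$ of $\End(\cT)$ with $s_A{}^B\boldsymbol{Z}^A\boldsymbol{Z}_B=iu$ satisfying the first-order equation $\nabla_1 s_A{}^B=u\kappa_{10A}{}^B \mod \boldsymbol{Z}_A\boldsymbol{Z}^B$ (\cref{prop:holomorphic-prolonged-system}); the identity then follows by integrating $s_A{}^C\,(\nabla^{\oneb}\kappa_{\oneb 0B}{}^A)\,s_C{}^B=0$ by parts, using \cref{lem:obstruction-flat-as-divergence} to encode obstruction flatness as $\nabla^{\oneb}\kappa_{\oneb 0A}{}^B=0$, so that the curvature is hit twice and $|Q|^2$ appears. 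Note also that ambient holomorphy of $X$ is used only once, in \cref{lem:solution-from-holomorphic-vf}, to produce the solution $u$; the integration by parts is entirely intrinsic to the (possibly abstract) CR manifold and is not a place where holomorphy off $M$ cancels error terms.
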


Taking $X$ to be $i$ times the Euler vector field, \cref{thm:main-theorem1} applies to all domains in a large $C^1$ open neighborhood of the unit ball. More precisely, we define $\mathscr{D}$ to be the space of smooth embedded strictly pseudoconvex $3$-spheres in $\mathbb{C}^2$ (with the $C^{\infty}$ topology). By identifying an element $M\in\mathscr{D}$ with the domain $\Omega\subset \mathbb{C}^2$ that it bounds, we think of $\mathscr{D}$ as the space of strictly pseudoconvex domains $\Omega\subset\mathbb{C}^2$ with smooth boundary $\partial \Omega$ diffeomorphic to $S^3$. Let $\mathscr{U}\subset \mathscr{D}$ denote the open neighborhood of the unit ball $\mathbb{B}^2$ consisting of domains for which $X$ is strictly $1$-approximately tangent to the boundary; this holds if the real part of the Euler field is closer to being normal than tangent to the boundary of the domain. 

\begin{corollary}
For all domains $\Omega$ in the neighborhood $\mathscr{U}$ of the unit ball $\mathbb{B}^2 \subset \mathbb{C}^2$, if $\partial\Omega$ is obstruction flat, then $\Omega$ is biholomorphic to the unit ball.
\end{corollary}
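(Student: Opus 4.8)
The plan is to deduce the corollary from \cref{thm:main-theorem1} together with a (soft) globalization argument.

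\emph{Reduction to \cref{thm:main-theorem1}.} For $\Omega\in\mathscr U$ I would apply \cref{thm:main-theorem1} with $X=iE$, where $E=z^1\partial_{z^1}+z^2\partial_{z^2}$ is the Euler field. As a polynomial vector field, $X$ is holomorphic on all of $\mathbb{C}^2$, hence on $\overline{\Omega}$, and by the definition of $\mathscr U$ the real part of $X$ is strictly $1$-approximately tangent to $\partial\Omega$ (almost everywhere). To see that $\mathbb{B}^2\in\mathscr U$, and that $\mathscr U$ is a genuine $C^1$-neighborhood of it: taking $\rho=|z|^2-1$ one computes $X\rho=i|z|^2$, so the potential $u=\overline{iX\rho}$ equals $-|z|^2$, which is $-1$ on $\partial\mathbb{B}^2$; then $\mathrm{Im}\,u\equiv 0$, so $(\mathrm{Im}\,u)^2=0<1=(\mathrm{Re}\,u)^2$, and since $u$ depends continuously on $\partial\Omega$ in the $C^1$ topology the strict inequality survives small $C^1$ perturbations. \cref{thm:main-theorem1} then gives that $\partial\Omega$ is locally spherical.

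\emph{Globalizing sphericity.} Since $\Omega\in\mathscr D$, the boundary $\partial\Omega$ is diffeomorphic to $S^3$, hence simply connected. I would invoke the CR rigidity of the sphere: any CR diffeomorphism between connected open subsets of $S^3$ is the restriction of a unique element of $\mathrm{Aut}_{\mathrm{CR}}(S^3)=\mathrm{PU}(2,1)$. Thus a covering of $\partial\Omega$ by local CR charts $\phi_\alpha\colon U_\alpha\to S^3$ has transition maps $\phi_\alpha\circ\phi_\beta^{-1}$ that coincide with elements $g_{\alpha\beta}\in\mathrm{PU}(2,1)$, locally constant on overlaps; these form a locally constant $1$-cocycle with values in $\mathrm{PU}(2,1)$, i.e. a flat $\mathrm{PU}(2,1)$-structure, which on the simply connected $\partial\Omega$ must be trivial. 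The charts therefore patch into a global local CR diffeomorphism (developing map) $\Phi\colon\partial\Omega\to S^3$. As $\partial\Omega$ is compact, $\Phi$ is proper, hence a covering map onto the connected manifold $S^3$; since $\pi_1(S^3)=0$ it is one-sheeted, so $\Phi\colon\partial\Omega\to\partial\mathbb{B}^2$ is a CR diffeomorphism.

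\emph{Filling the CR diffeomorphism.} The compact connected hypersurface $\partial\Omega\subset\mathbb{C}^2\cong\mathbb{R}^4$ has connected complement $\mathbb{C}^2\setminus\overline{\Omega}$ by the Jordan--Brouwer separation theorem, so the Hartogs--Bochner extension theorem extends each component of the $\mathbb{C}^2$-valued CR map $\Phi$ to an element of $\mathcal O(\Omega)\cap C(\overline{\Omega})$; this yields $F\in\mathcal O(\Omega)\cap C(\overline{\Omega})$ with $F|_{\partial\Omega}=\Phi$. Since $|F|^2$ is plurisubharmonic, $F(\overline{\Omega})\subseteq\overline{\mathbb{B}^2}$, and the maximum principle forces $F(\Omega)\subseteq\mathbb{B}^2$ (otherwise $F$ would be constant); moreover $F$ is proper onto $\mathbb{B}^2$, because if $z_k\to\partial\Omega$ then along a subsequence $z_k\to p\in\partial\Omega$ and $F(z_k)\to\Phi(p)\in\partial\mathbb{B}^2$. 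A proper holomorphic map between equidimensional bounded domains is a branched covering, and comparing topological degrees shows its degree equals $\deg\Phi=1$; hence $F\colon\Omega\to\mathbb{B}^2$ is a biholomorphism. (Alternatively, one may invoke the known fact that a CR diffeomorphism between the boundaries of two smooth bounded strictly pseudoconvex domains extends to a biholomorphism of the domains.)

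\emph{Main obstacle.} All of the genuine content sits in \cref{thm:main-theorem1}; the corollary is a routine assembly of classical facts. The steps requiring the most care are the globalization of local sphericity (via CR rigidity of $S^3$, simple connectivity, and the observation that a proper local diffeomorphism onto $S^3$ is a covering) and the Hartogs--Bochner-plus-degree argument of the last step; in particular one should first confirm that $\mathbb{C}^2\setminus\overline{\Omega}$ is connected, which is immediate from $\partial\Omega\cong S^3$.
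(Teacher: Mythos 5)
Your proposal is correct and is exactly the argument the paper intends (the corollary is stated without proof as an immediate consequence of \cref{thm:main-theorem1} applied to $X=iE$): the computation $u=\overline{iX\rho}=-|z|^2$ on the sphere, the openness of the strict inequality in $C^1$, the developing-map globalization of local sphericity on the simply connected $\partial\Omega\cong S^3$, and the extension of the resulting CR diffeomorphism to a biholomorphism onto $\mathbb{B}^2$ are all the standard steps being invoked. Nothing to correct.
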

\begin{remark}
While $\mathscr{D}$ is naturally equipped with the $C^{\infty}$ topology, by construction the set $\mathscr{U}$ is clearly also open in the (relative) $C^1$ topology.
\end{remark}

The conjugate $u$ of the Hamiltonian potential for a holomorphic vector field $X$ satisfies the differential equation
\begin{equation}\label{eqn:infinitesimal-automorphism-equation}
\nabla_1\nabla_1 u +iA_{11}u=0
\end{equation}
on the CR manifold $M\subset \mathbb{C}^2$, where $\nabla$ denotes the Tanaka-Webster connection of some contact form and $A_{11}$ is its pseudohermitian torsion; the operator $\nabla_1\nabla_1 +iA_{11}$ is in fact CR invariant when acting on CR densities of weight $(1,1)$, cf.\ \cref{sec:weighted-pseudohermitian-calculus}, and the corresponding equation makes sense on an abstract CR manifold. We have the following more general result, which implies \cref{thm:main-theorem1}.

\begin{theorem}\label{thm:main-theorem2}
Let $(M,H,J)$ be a compact strictly pseudoconvex CR $3$-manifold for which there exists a weight $(1,1)$ complex solution $u$ of \cref{eqn:infinitesimal-automorphism-equation} such that $(\mathrm{Im}\,u)^2 < (\mathrm{Re}\,u)^2$ almost everywhere. If $(M,H,J)$ is obstruction flat, then it is locally spherical.
\end{theorem}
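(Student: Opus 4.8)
The plan is to derive and then exploit an integral identity ("Bochner-type formula") for solutions $u$ of \cref{eqn:infinitesimal-automorphism-equation} on a compact strictly pseudoconvex CR $3$-manifold, in which the obstruction density $b\eta_1$ (the CR invariant whose vanishing defines obstruction flatness) appears paired against a nonnegative weight built from $u$. On a CR $3$-manifold the obstruction is, up to normalization, a density of the form $\nabla^1\nabla^1 S$ plus lower-order curvature terms (the linearization around the sphere of the Burns-Epstein/$Q'$ functional), where $S$ is the Webster scalar curvature; the operator $P := \nabla_1\nabla_1 + iA_{11}$ is the CR-invariant operator on weight $(1,1)$ densities featured in the statement, and its formal adjoint $P^*$ (on the appropriate dual weight) recovers, modulo curvature, the divergence structure of the obstruction. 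First I would fix a contact form and use the weighted pseudohermitian calculus of \cref{sec:weighted-pseudohermitian-calculus} to write the obstruction flatness equation in the form $\nabla_1\nabla_1(\text{something}) + (\text{torsion/curvature})(\text{something}) = 0$, then integrate $\overline{u}\cdot(\text{obstruction})$ over $M$ and integrate by parts twice, moving the two $\nabla_1$'s onto $\overline{u}$. Because $u$ solves $Pu = 0$, exactly the second-order terms that would otherwise obstruct the computation get killed, and one is left with an integral $\int_M (\text{curvature quantity})\,|u|^2\,(\text{or a real quadratic in }u)$ plus, crucially, a term of the form $\int_M c\,|\nabla_1 u|^2$ or $\int_M c\,(\mathrm{Re}\,u^2 - |u|^2\text{-type})\,Q$ with a definite sign.

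The key steps, in order: (1) Establish the integral identity of the abstract version of the main technical result referenced in the abstract — for any weight $(1,1)$ solution $u$ of $Pu=0$ on a compact strictly pseudoconvex CR $3$-manifold, $\int_M (b\eta_1)\cdot(\text{weight in }u) = \int_M (\text{manifestly real, sign-definite expression involving }S,\ A,\ \text{and }u)$; this is the CR analog of the embedded identity, and the definition of "ambient holomorphic vector field along the CR boundary" in the abstract setting (mentioned in the abstract) is precisely engineered so that this integration by parts closes. (2) Impose obstruction flatness $b\eta_1\equiv 0$, so the left side vanishes. (3) Analyze the resulting right-hand side: it should decompose as $\int_M S_0\,(\mathrm{Re}\,u)^2\text{-weighted terms} - \int_M S_0\,(\mathrm{Im}\,u)^2\text{-weighted terms} + \int_M (\text{gradient terms})$, where $S_0 \geq 0$ is the defect from being locally spherical (so that $S_0 \equiv 0$ iff locally spherical), and use the hypothesis $(\mathrm{Im}\,u)^2 < (\mathrm{Re}\,u)^2$ a.e. together with the fact that a nontrivial solution of the second-order ODE-type equation $Pu=0$ cannot vanish on a positive-measure set (by unique continuation / the structure of \cref{eqn:infinitesimal-automorphism-equation} along the Reeb flow). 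This forces each sign-definite piece to vanish identically, hence $S_0 \equiv 0$, i.e.\ $(M,H,J)$ is locally spherical. (4) Derive \cref{thm:main-theorem1} from \cref{thm:main-theorem2} via \cref{lem:solution-from-holomorphic-vf}: a holomorphic vector field $X$ on $\overline{\Omega}$ restricts to a solution $u = \overline{iX\rho}$ of \cref{eqn:infinitesimal-automorphism-equation} on $\partial\Omega$, and the strict $1$-approximate tangency hypothesis translates verbatim into $(\mathrm{Im}\,u)^2 < (\mathrm{Re}\,u)^2$ a.e.

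The main obstacle I expect is step (1): carrying out the double integration by parts so that \emph{all} the terms of the wrong sign cancel, leaving a sign-definite integrand. Naively integrating by parts will produce a mess of curvature terms (involving $S$, $A_{11}$, $\nabla A$, and $|\nabla u|^2$) of indefinite sign; the content of the identity is that, for $u$ satisfying $Pu=0$, miraculous cancellations reduce this to something like $\int_M |b\eta_1|^2$-adjacent positivity (on the sphere side) — essentially one is re-deriving a curvature-weighted version of the Burns-Epstein variational formula. Getting the exact form requires a careful bookkeeping of the weighted commutation relations (the CR-invariance of $P$ on weight $(1,1)$ densities is what guarantees the bookkeeping is consistent), and I anticipate needing the identity $P^*P$ acting on densities has a specific Bochner decomposition. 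A secondary subtlety is the a.e.\ argument in step (3): one must rule out the degenerate possibility that $\mathrm{Re}\,u \equiv 0$ on a large set where $S_0$ is concentrated; this is handled because $u$ solving \cref{eqn:infinitesimal-automorphism-equation} is real-analytic along characteristic directions in an appropriate sense (the equation is, along each Reeb orbit transverse structure, an overdetermined system whose solution space is finite-dimensional), so $\{u = 0\}$ has measure zero unless $u \equiv 0$, and $u\equiv 0$ is excluded by the almost-everywhere strict inequality hypothesis.
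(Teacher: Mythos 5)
Your endgame is right and matches the paper: everything reduces to an integral identity of the form $\int_M u^2\,|Q|^2=0$ for any weight $(1,1)$ solution of \cref{eqn:infinitesimal-automorphism-equation} on an obstruction flat manifold (this is \cref{thm:main-technical-theorem}), after which $(\mathrm{Im}\,u)^2<(\mathrm{Re}\,u)^2$ a.e.\ is exactly $\mathrm{Re}\,(u^2)>0$ a.e., forcing $|Q|^2=0$ and hence local sphericity; your step (4) deriving \cref{thm:main-theorem1} via \cref{lem:solution-from-holomorphic-vf} is also correct. Note that your worry about unique continuation in step (3) is moot: the a.e.\ strict inequality already makes the weight $\mathrm{Re}\,(u^2)$ positive a.e., so no separate argument about the zero set of $u$ is needed.

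The genuine gap is step (1), which is the entire content of the theorem. Pairing the obstruction density $\mathcal{O}=\tfrac13(\nabla^1\nabla^1Q_{11}-iA^{11}Q_{11})$ directly with $\bar u$ and integrating by parts twice moves $\nabla^1\nabla^1=\bh^{1\oneb}\bh^{1\oneb}\nabla_{\oneb}\nabla_{\oneb}$ onto $\bar u$, and the conjugate of \cref{eqn:infinitesimal-automorphism-equation} then makes the result identically zero --- the Cartan tensor drops out entirely and you learn nothing. Pairing with $u$ instead leaves $\int Q_{11}(\nabla^1\nabla^1u-iA^{11}u)$, which is not controlled by the equation (which governs $\nabla_1\nabla_1u$, not $\nabla_{\oneb}\nabla_{\oneb}u$) and has no sign. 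No amount of ``miraculous cancellation'' in this direct computation produces $\int u^2|Q|^2$; the factor $u^2$ (as opposed to $|u|^2$ or a bilinear pairing with $\bar u$) cannot arise this way. The missing idea is a \emph{prolongation}: from $u$ one constructs a trace-free section $s_A{}^B$ of the complexified adjoint tractor bundle with $s_A{}^B\boldsymbol{Z}^A\boldsymbol{Z}_B=iu$ and $\nabla_1 s_A{}^B=u\,\kappa_{10A}{}^B \mod \boldsymbol{Z}_A\boldsymbol{Z}^B$ (\cref{prop:holomorphic-prolonged-system}), while obstruction flatness is recast as $\nabla^{\oneb}\kappa_{\oneb 0A}{}^B=0$ (\cref{lem:obstruction-flat-as-divergence}). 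A \emph{single} integration by parts applied to $\int s_A{}^C(\nabla^{\oneb}\kappa_{\oneb 0B}{}^A)s_C{}^B$ then yields one factor of $u$ from $\nabla^{\oneb}s$ hitting $\kappa$, a second factor of $iu$ from the distinguished entry of $s$, and $|Q|^2$ from the product $\kappa_{10}\kappa_{\oneb 0}$. Without this tractor-level mechanism (or an equivalent prolonged system) the proof does not close.
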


This theorem will follow easily from:
\begin{theorem}\label{thm:main-technical-theorem}
Let $(M,H,J)$ be a compact strictly pseudoconvex CR $3$-manifold which is obstruction flat. For any weight $(1,1)$ complex solution $u$ of \cref{eqn:infinitesimal-automorphism-equation} we have
$$
\int_M u^2 |Q|^2 = 0.
$$
\end{theorem}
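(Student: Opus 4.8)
Throughout, the identity to be proven is CR invariant, so I fix a contact form on $M$ and compute with its Tanaka--Webster connection $\nabla$, pseudohermitian torsion $A_{11}$, and Webster scalar curvature $R$; recall that in CR dimension one the Cartan tensor $Q=Q_{11}$ (whose vanishing means locally spherical) and the Fefferman obstruction density $\mathcal{O}$ (whose vanishing means obstruction flat) are explicit CR‑covariant polynomials in $A_{11}$, $R$, and their covariant derivatives, and in particular $\mathcal{O}$ is expressible through $Q$ and its covariant derivatives. The plan is to establish a \emph{pointwise} curvature identity, valid on any strictly pseudoconvex CR $3$-manifold and for an arbitrary density $u$ of weight $(1,1)$ (not assumed to solve \eqref{eqn:infinitesimal-automorphism-equation}), of the form
$$
c_0\, u^2 |Q|^2 \;=\; \mathrm{div}\, V(u) \;+\; c_1\, |u|^2\, \mathcal{O} \;+\; E(u),
$$
with $c_0\neq 0$, where $V(u)$ is a vector density built from $u$ and the curvature and $E(u)$ is a sum of terms each carrying a factor of $\nabla_1\nabla_1 u + i A_{11} u$ or of its complex conjugate. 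Granting this, integrating over the compact $M$ against $\theta\wedge d\theta$ kills the divergence by Stokes' theorem and kills $E(u)$ because $u$ solves \eqref{eqn:infinitesimal-automorphism-equation} (so $\bar u$ solves the conjugate equation), leaving $c_0\int_M u^2|Q|^2 = c_1\int_M |u|^2\,\mathcal{O}$; obstruction flatness, $\mathcal{O}\equiv 0$, then gives $\int_M u^2|Q|^2 = 0$, which is the claim.

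To obtain the pointwise identity I would start from a natural ``energy density'' that is quadratic in $u$ and first order in the Cartan tensor --- for instance a contraction of the conjugate Cartan tensor against $\nabla_1\nabla_1(u^2)$, or against $\nabla_1(u\,\nabla_1 u)$ --- and integrate by parts repeatedly, at each stage transferring covariant derivatives off $Q$ and onto $u$. Along the way I would: (i) reduce every second $\partial_b$-derivative of $u$ using $\nabla_1\nabla_1 u = -iA_{11}u$ together with the identities obtained by differentiating and commuting it, producing the $E(u)$ error terms whenever $u$ is not assumed to be a solution; (ii) commute covariant derivatives with the CR commutation formulas on densities and the second Bianchi identities of a CR $3$-manifold, in particular those that express the divergences of $Q_{11}$ in terms of $\nabla A$ and $\nabla R$ and that exhibit the specific combination equal to $\mathcal{O}$; and (iii) collect terms so that, after all cancellations, only the multiple of $u^2|Q|^2$, the $\mathcal{O}$-term, the divergence, and the solution-error terms survive. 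Since such a computation may a priori only control $\mathrm{Re}\int_M u^2|Q|^2$, the vanishing of the imaginary part would be recovered either by running the argument with holomorphic and antiholomorphic indices interchanged, or by checking directly that $\int_M u^2|Q|^2$ is forced to be real.

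The main obstacle is precisely the pointwise identity: one must guess the correct quadratic expression in $(u,Q)$ and push through a substantial, carefully bookkept CR Bochner/Weitzenb\"ock computation so that, using \emph{only} $\nabla_1\nabla_1 u + iA_{11}u = 0$ and its conjugate, every residual curvature term cancels and $\mathcal{O}$ emerges with a definite nonzero coefficient. This relies on the explicit dimension-one structure equations and on the precise relation between the Cartan tensor and the Fefferman obstruction; tracking the constants --- in particular verifying $c_0\neq 0$, without which the conclusion would be vacuous --- is the delicate part of the argument.
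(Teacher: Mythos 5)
Your proposal correctly identifies the global skeleton of the argument --- a divergence identity, plus a term killed by obstruction flatness, plus error terms killed by \cref{eqn:infinitesimal-automorphism-equation} --- and this is indeed the shape of the paper's proof. But the entire content of the theorem lies in producing the pointwise identity you defer to the end, and the specific route you propose for finding it would not work. If you integrate by parts with the Tanaka--Webster connection, starting from contractions such as $Q^{\oneb\oneb}\nabla_1\nabla_1(u^2)$ and using the pseudohermitian commutation formulas and Bianchi identities, every identity you can generate is \emph{linear} in the Cartan tensor $Q_{11}$: the Tanaka--Webster curvature and torsion commutators produce only $R$ and $A_{11}$, never $Q_{11}$, so no amount of commuting derivatives manufactures the quadratic term $u^2|Q|^2$. (For instance, pairing $u^2$ against $\overline{\mathcal{O}}$ and integrating by parts twice, using $\nabla^{\oneb}\nabla^{\oneb}u=-iA^{\oneb\oneb}u$, yields $\int(2\nabla^{\oneb}u\,\nabla^{\oneb}u - iu^2A^{\oneb\oneb})Q_{\oneb\oneb}=0$ --- a true but different identity, still linear in $Q$.) In the paper the quadratic term arises only because the object being differentiated is a section $s_A{}^B$ of $\End(\mathcal{T})$ whose derivative is itself $u$ times the tractor curvature, $\nabla_1 s_A{}^B = u\kappa_{10A}{}^B \bmod \boldsymbol{Z}_A\boldsymbol{Z}^B$; the $|Q|^2$ then comes from the product $\kappa_{10}\kappa_{\oneb 0}$ of two curvature components. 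Constructing that $s_A{}^B$ from $u$ is exactly \cref{prop:holomorphic-prolonged-system}, and it is the missing key ingredient, not a routine bookkeeping step.

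Two further symptoms suggest you are modelling the wrong kind of identity. First, your candidate obstruction term $c_1|u|^2\mathcal{O}$ mixes $u$ with $\bar u$, and you allow error terms proportional to the conjugate equation; the actual argument is entirely holomorphic in $u$ --- the conjugate equation is never used, and the coefficient of $\mathcal{O}$ is a (fourth-order) differential expression in $u$ alone, built from the prolonged variables $(u,\nabla^1u,\chi,\eta_1,\lambda)$. Second, precisely because the computation is holomorphic in $u$, the full complex integral $\int_M u^2|Q|^2$ vanishes directly; there is no need for your fallback of controlling only the real part, and it is not clear that $\int_M u^2|Q|^2$ is real for a genuinely complex solution, so that fallback should not be relied upon.
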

\begin{remark}
Note that since $u$ has weight $(1,1)$ and $|Q|^2=\bh^{1\oneb}\bh^{1\oneb}Q_{11}Q_{\oneb\oneb}$ has weight $(-4,-4)$, the integrand $u^2 |Q|^2$ has weight $(-2,-2)$ and therefore may be naturally identified with a complex volume form on $M$. See \cref{sec:weighted-pseudohermitian-calculus} for a discussion of weights and densities.
\end{remark}
The main technical ingredient in the proof of this result is \cref{prop:holomorphic-prolonged-system}, which constructs from a solution $u$ of \cref{eqn:infinitesimal-automorphism-equation} a solution of a prolonged system of $\bar{\partial}_b$-equations involving the CR curvature.

\subsection{Outline of the Paper}
In \cref{sec:pseudohermitian-calculus} we recall some background on CR geometry and weighted pseudohermitian calculus, and make some observations concerning complex solutions of the infinitesimal automorphism equation \cref{eqn:infinitesimal-automorphism-equation}. In \cref{sec:existence-of-solutions} we establish the connection between weight $(1,1)$ complex solutions of the infinitesimal automorphism equation on an embedded CR manifold and ambient holomorphic vector fields. In \cref{sec:tractor-calculus-and-prolongation} we introduce the CR tractor calculus and prove \cref{prop:holomorphic-prolonged-system}. In \cref{sec:main-technical-theorem} we prove our main results, \cref{thm:main-theorem2,thm:main-technical-theorem}.

\section{Strictly pseudoconvex CR $3$-manifolds}\label{sec:pseudohermitian-calculus}

For the reader's convenience, we here recall the general setup from \cite{CE2018-obstruction-flatI}. A \emph{strictly pseudoconvex CR 3-manifold} is a triple $(M,H,J)$ where $M$ is a smooth oriented $3$-manifold, $H\subset TM$ is a contact distribution, and $J:H\to H$ is a smooth bundle endomorphism such that $J^2=-\mathrm{id}$. The partial complex structure $J$ on $H\subset TM$ defines an orientation of $H$, and therefore defines an orientation on the annihilator subbundle $H^{\perp}:=\mathrm{Ann}(H)\subset T^*M$. Given any contact form $\theta$ for $H$, $d\theta |_H$ is a nondegenerate bilinear form. A contact form $\theta$ for $H$ is positively oriented if $\dee \theta( \,\cdot\, , J\,\cdot\,)$ is positive definite on $H$. A strictly pseudoconvex CR structure $(M,H,J)$ together with a choice of positively oriented contact form $\theta$ is referred to as a \emph{pseudohermitian structure}. The \emph{Reeb vector field} of a contact form $\theta$ is the vector field $T$ uniquely determined by $\theta(T)=1$ and $T\intprod \dee\theta =0$. 

Given a CR manifold $(M,H,J)$ we decompose the complexified contact distribution $\mathbb{C}\otimes H$
as $T^{1,0}\oplus T^{0,1}$, where $J$ acts by $i$ on $T^{1,0}$ and by $-i$ on $T^{0,1}=\overline{T^{1,0}}$. Let $\theta$ be an oriented contact form on $M$. Let $Z_1$ be a local frame for the \emph{holomorphic tangent bundle} $T^{1,0}$ and $Z_{\oneb}=\overline{Z_1}$, so that $\{T,Z_1,Z_{\oneb}\}$ is a local frame for $\mathbb{C}\otimes TM$. Then the dual frame $\{\theta,\theta^1,\theta^{\oneb}\}$ is referred to as an \emph{admissible coframe} and one has
\begin{equation}\label{eqn:h11-definition}
\dee \theta = i h_{1\oneb} \theta^1 \wedge \theta^{\oneb}
\end{equation}
for some positive smooth function $h_{1\oneb}$. The function $h_{1\oneb}$ is the component of the Levi form $\mathrm{L}_{\theta}(U,\overline{V})=-2i\dee\theta(U,\overline{V})$ on $T^{1,0}$, that is
\begin{equation*}
\mathrm{L}_{\theta}(U^1Z_1,V^{\oneb}Z_{\oneb}) = h_{1\oneb}U^1V^{\oneb}.
\end{equation*}
It is sometimes convenient to scale $Z_1$ so that $h_{1\oneb}=1$, but we will not assume this. We write $h^{1\oneb}$ for the multiplicative inverse of $h_{1\oneb}$. The Tanaka-Webster connection associated to $\theta$ is given in terms of such a local frame $\{T,Z_1,Z_{\oneb}\}$ by
\begin{equation*}
\nabla Z_1 = \omega_1{}^{1}\otimes Z_1, \quad \nabla Z_{\oneb} = \omega_{\oneb}{}^{\oneb}\otimes Z_{\oneb}, \quad \nabla T =0
\end{equation*}
where the connection $1$-forms $\omega_1{}^{1}$ and $\omega_{\oneb}{}^{\oneb}$ satisfy
\begin{equation}\label{eqn:pseudohermitian-connection1}
\dee \theta^1 = \theta^1\wedge \omega_1{}^{1} + A^1{}_{\oneb}\,\theta\wedge\theta^{\oneb}, \text{ and}
\end{equation}
\begin{equation}\label{eqn:pseudohermitian-connection2}
\omega_1{}^{1} + \omega_{\oneb}{}^{\oneb} =h^{1\oneb}\dee h_{1\oneb},
\end{equation}
for some function $A^1{}_{\oneb}$. The uniquely determined function $A^1{}_{\oneb}$ is known as the \emph{pseudohermitian torsion}. Components of covariant derivatives will be denoted by adding $\nabla$ with an appropriate subscript, so, e.g., if $u$ is a function then $\nabla_1 u = Z_1 u$ and $\nabla_0\nabla_1 u = T Z_1 u - \omega_1{}^{1}(T)Z_1u$. We may also use $h_{1\oneb}$ and $h^{1\oneb}$ to raise and lower indices, so that $A_{\oneb\oneb}= h_{1\oneb}A^1{}_{\oneb}$ and $A_{11}=h_{1\oneb}A^{\oneb}{}_{1}$, with $A^{\oneb}{}_{1} =\overline{A^1{}_{\oneb}}$.

We recall some useful formulae, which can be found in \cite{CurryGover-arxiv2015, Lee1988}. The \emph{pseudohermitian (scalar) curvature} $R$ is defined by the structure equation
\begin{equation*}
\dee \omega_1{}^{1} = Rh_{1\oneb} \theta^1\wedge \theta^{\oneb} + (\nabla^1 A_{11})\,\theta^1\wedge\theta - (\nabla^{\oneb} A_{\oneb\oneb})\,\theta^{\oneb}\wedge\theta.
\end{equation*}
The torsion of the Tanaka-Webster connection (as an affine connection) is captured by the following formulae, for a smooth function $f$,
\begin{equation*}
\nabla_1\nabla_{\oneb}f-\nabla_{\oneb}\nabla_{1}f  =-i h_{1\oneb}\nabla_{0}f,\quad \text{ and }\quad  \nabla_{1}\nabla_{0}f-\nabla_{0}\nabla_{1}f  =A^{\oneb}{_{1}}\nabla_{\oneb}f.
\end{equation*}
 The pseudohermitian curvature $R$ may therefore equivalently be defined by the Ricci identity
\begin{equation}\label{eqn:Ricci-identity}
\nabla_1\nabla_{\oneb} V^1 -\nabla_{\oneb}\nabla_1 V^1 + i h_{1\oneb}\nabla_0 V^1 = Rh_{1\oneb} V^1
\end{equation}
for any local section $V^1Z_1$ of $T^{1,0}$. Commuting $0$ and $1$ (or $\oneb$) derivatives on $V^1Z_1$ gives torsion according to the following formulae
\begin{equation}\label{eqn:Tanaka-Webster-commuting-10-derivatives}
\nabla_1\nabla_{0} V^1 -\nabla_{0}\nabla_1 V^1 - A^{\oneb}{_{1}}\nabla_{\oneb}V^1 = (\nabla^1 A_{11}) V^1, \;\;\text{and}
\end{equation}
\begin{equation*}
\nabla_{\oneb}\nabla_{0} V^1 -\nabla_{0}\nabla_{\oneb} V^1 - A^{1}{_{\oneb}}\nabla_{1}V^1 = (\nabla^{\oneb} A_{\oneb\oneb}) V^1.
\end{equation*}
In dimension $3$, the Bianchi identities of \cite[Lemma 2.2]{Lee1988} reduce to
\begin{equation}\label{eqn:pseudohermitian-Bianchi}
\nabla_0 R = 2\mathrm{Re}\,(\nabla^1 \nabla^1 A_{11}).
\end{equation}

\subsection{Weighted pseudohermitian calculus}\label{sec:weighted-pseudohermitian-calculus}

Let $(M,H,J)$ be a CR $3$-manifold, and let $\Lambda^{1,0}$ denote the complex rank $2$ bundle of $(1,0)$-forms on $M$. The bundle $\Lambda^{2,0}=\Lambda^2(\Lambda^{1,0})$ of $(2,0)$-forms is referred to as the \emph{canonical line bundle} of $M$, and denoted by $\scrK$. We assume throughout that its dual $\scrK^*$ admits a (global) cube root, which we fix and denote by $\cE(1,0)$. Note that this is equivalent to assuming that the integral first Chern class of $\scrK$ is divisible by $3$; in particular this holds for hypersurfaces in $\mathbb{C}^2$. We then define the \emph{CR density line bundle} of weight $(w,w')$ to be $\cE(w,w') = \cE(1,0)^w \otimes \overline{\cE(1,0)}^{w'}$, where $w,w'\in \mathbb{C}$ with $w-w'\in \mathbb{Z}$. Note that for $w$ real the bundle $\cE(w,w)$ is invariant under conjugation, and hence contains a real subbundle $\cE_{\mathbb{R}}(w,w)$. Note also that by definition $\cE(3,0)=\scrK^*$, so $\cE(-3,0)=\scrK$.

Trivializing the bundle $TM/H$ determines a contact form on $M$ via the natural map $TM\rightarrow TM/H$.
Similarly, a choice of non-vanishing section $\zeta$ (i.e.\ a trivialization) of $\scrK$ determines canonically a contact form $\theta$ on $M$ by the requirement \cite{Farris1986} (see also \cite{Lee1988}) that
\begin{equation}\label{eq:volume-normalized}
\theta\wedge \dee\theta = i\theta \wedge (T\intprod \zeta)\wedge (T\intprod\overline{\zeta}).
\end{equation}
In this case we say that $\theta$ is \emph{volume normalized} with respect to $\zeta$. Combining these observations, we may realize $TM/H$ as a real CR density line bundle as follows. A contact form $\theta$ determines canonically a section $|\zeta|^2=\zeta\otimes\overline{\zeta}$ of $\scrK\otimes\overline{\scrK} = \cE(-3,-3)$ by the condition that $\zeta$ satisfy \cref{eq:volume-normalized} ($\zeta$ is only determined up to phase at each point). If we rescale $\theta$ to $\thetah=e^{\Ups}\theta$, with $\Ups\in C^{\infty}(M,\mathbb{R})$, then the corresponding section $|\hat{\zeta}|^2$ equals $e^{3\Ups}|\zeta|^2$. Thus, the map which assigns to a contact form $\theta$ the section $|\zeta|^{2/3}$ of $\cE_{\mathbb{R}}(-1,-1)$ extends to a canonical isomorphism of $H^{\perp}$ with $\cE_{\mathbb{R}}(-1,-1)$. Dually $TM/H$ is canonically isomorphic to $\cE_{\mathbb{R}}(1,1)$. This identification gives us a tautological $1$-form $\btheta$ of weight $(1,1)$, corresponding to the map $TM\to TM/H=\cE_{\mathbb{R}}(1,1)$.

We define the \emph{CR Levi form} $\Levi:T^{1,0}\otimes T^{0,1} \to \mathbb{C}TM/\mathbb{C}H$ by
\begin{equation*}
\Levi(U,\overline{V}) = 2i[U,\overline{V}]\;\, \mathrm{mod}\; \mathbb{C}H.
\end{equation*}
On a strictly pseudoconvex CR $3$-manifold the CR Levi form is a bundle isomorphism, so we have $T^{1,0}\otimes T^{0,1} \cong \mathbb{C}TM/\mathbb{C}H = \cE(1,1)$. The CR Levi form may be interpreted as a Hermitian bundle metric on $T^{1,0}\otimes\cE(-1,0)$, and we would like to have a more concise notation for bundles like this one. We use the symbol $\cE$ decorated with appropriate indices to denote the tensor bundles constructed from $T^{1,0}$ and $T^{0,1}$ (this is Penrose's abstract index notation). For example, $\cE^1=T^{1,0}$, $\mathcal{E}_{\oneb}=(T^{0,1})^*$, and $\mathcal{E}_{1\oneb}=(T^{1,0})^*\otimes(T^{0,1})^*$. We will now generally use abstract index notation for sections of these bundles. So, for example, $V^1$ may denote a global section of $\cE^1=T^{1,0}$ (previously written locally as $V^1Z_1$). This keeps the notation from getting too heavy, and allows us to globalize our previous local formulas. Note that a choice of contact form allows us to decompose the complexified tangent bundle $\mathbb{C}TM$ as $\cE^1\oplus\cE^{\oneb}\oplus\cE(1,1)$. Using abstract index notation we may therefore decompose $V$ globally as $V\overset{\theta}{=} (V^1,V^{\oneb},V^0)$. Generally we denote the tensor product of a complex vector bundle $\mathcal{V}$ on $M$ with $\cE(w,w')$ by appending $(w,w')$, as in $\mathcal{V}(w,w')$. The CR Levi form will be thought of as a section $\bh_{1\oneb}$ of $\cE_{1\oneb}(1,1)$, with inverse $\bh^{1\oneb}$. The Levi form will be used to identify $\cE^{1\oneb}$ with $\cE(1,1)$, and $\cE_{1\oneb}$ with $\cE(-1,-1)$, and to raise and lower indices without comment. 

Observe that the Tanaka-Webster connection $\nabla$ of a pseudohermitian structure $\theta$ extends naturally to act on the CR density bundles, since $\nabla$ acts on the canonical bundle $\scrK$. Since the Tanaka-Webster connection of $\theta$ preserves $\theta$, and also preserves the section $|\zeta|^2$ of $\scrK\otimes\overline{\scrK} = \cE(-3,-3)$ determined by volume normalization, the Tanaka-Webster connection respects the CR invariant identification of $TM/H$ with $\cE_{\mathbb{R}}(1,1)$; see \cite{CurryGover-arxiv2015,GoverGraham2005}. Another way of saying this is that $\nabla \btheta = 0$. A similar argument shows that $\nabla$ preserves the CR Levi form, $\nabla \Levi =0$. Hence, the Tanaka-Webster connection of $\theta$ respects all of the CR invariant identifications made above. We therefore make use of CR densities whenever convenient.

We will need to commute derivatives of weighted tensor fields, for this we need to know the curvature of the CR density bundles. Let $\tau$ be a section of $\cE(w,w')$. From \cref{eqn:Ricci-identity,eqn:Tanaka-Webster-commuting-10-derivatives} one easily obtains (cf.\ \cite[Proposition 2.2]{GoverGraham2005}) that
\begin{align}
\label{eqn:commuting-11bar-derivatives-on-densities}
\nabla_1\nabla_{\oneb} \tau -\nabla_{\oneb}\nabla_1 \tau + i \bh_{1\oneb}\nabla_0 \tau & = \frac{w-w'}{3}R \bh_{1\oneb}\, \tau \,; \\
\label{eqn:commuting-10-derivatives-on-densities}
\nabla_1\nabla_{0} \tau -\nabla_{0}\nabla_1 \tau - A^{\oneb}{_{1}}\nabla_{\oneb}\tau & = \frac{w-w'}{3}(\nabla_{\oneb} A^{\oneb}{}_1) \tau.
\end{align}

\subsection{CR invariants}

The local calculus on CR manifolds associated with the CR Cartan connection is discussed in more detail in Section 7 of \cite{CE2018-obstruction-flatI}. For now it suffices to recall some basic definitions and formulae in terms of pseudohermitian calculus. The \emph{Cartan umbilical tensor} $Q$ of $(M,H,J)$ is a (weighted) CR invariant, whose vanishing in a neighborhood is necessary and sufficient for $(M,H,J)$ to be locally equivalent to the induced CR structure on the unit sphere in $\mathbb{C}^2$. As in \cite{ChengLee1990}, given a choice of contact form $\theta$ we interpret the umbilical tensor $Q$ as an endomorphism of $H$, written locally as
\begin{equation}\label{eqn:Q-endomorphism}
Q=iQ_1{}^{\oneb}\theta^1\otimes Z_{\oneb} - iQ_{\oneb}{}^1\theta^{\oneb}\otimes Z_1.
\end{equation}
By \cite[Lemma 2.2]{ChengLee1990} the component $Q_{11}$ of Cartan's tensor is given by
\begin{equation}\label{eqn:Cartan-umbilical-tensor}
Q_{11} =-\frac{1}{6}\nabla_1\nabla_1 R - \frac{i}{2}RA_{11} + \nabla_0 A_{11} +\frac{2i}{3} \nabla_1\nabla^1 A_{11},
\end{equation}
where we have taken the opposite sign convention. If $\thetah=e^{\Ups}\theta$ is another contact form, then $\hat{Q}=e^{-2\Ups}Q$, so that $Q$ may be thought of more invariantly as a weighted section of $\mathrm{End}(H)$. More precisely, $Q$ may be thought of as a CR invariant section of $\mathrm{End}(H)\otimes \cE_{\mathbb{R}}(-2,-2)$, the dependency on the contact form $\theta$ only being introduced when we use $\theta$ to trivialize $\cE_{\mathbb{R}}(1,1)=TM/H$. Note that this means $Q_{11}$ is a section of $\cE_{11}(-1,-1)$. The Bianchi identity for the curvature of the CR Cartan connection is equivalent to the following Bianchi identity for $Q$, 
\begin{equation}\label{eqn:Q11-Bianchi}
\mathrm{Im}(\nabla^1\nabla^1Q_{11}-iA^{11}Q_{11})=0,
\end{equation}
which may also be seen as a direct consequence of \cref{eqn:pseudohermitian-Bianchi}. The \emph{CR obstruction density} is given by
\begin{equation}\label{eqn:obstruction-density}
\mathcal{O}=\frac{1}{3}(\nabla^1\nabla^1Q_{11}-iA^{11}Q_{11}).
\end{equation}
The CR obstruction density $\mathcal{O}$ is again a (weighted) CR invariant. If $\thetah=e^{\Ups}\theta$ is another contact form, then $\hat{\mathcal{O}}=e^{-3\Ups}\mathcal{O}$, so that $\mathcal{O}$ defines a CR invariant section of $\cE_{\mathbb{R}}(-3,-3)$.

\subsection{Complex solutions to the infinitesimal automorphism equation}
Central to our considerations will be the operator 
\begin{equation}
\nabla_1\nabla_1 + iA_{11} : \cE(1,1)\to\cE_{11}(1,1)
\end{equation}
whose formal adjoint, with respect to the canonical CR invariant weight $(2,2)$ volume form on $M$, is the operator $\nabla^1\nabla^1-iA^{11}:\cE_{11}(-1,-1)\to \cE(-3,-3)$ appearing in \cref{eqn:obstruction-density}. It is well known that a CR manifold $M$ possesses an infinitesimal CR automorphism if and only if there is a real solution to the \emph{infinitesimal automorphism equation} 
$$
\nabla_1\nabla_1 f + iA_{11}f =0,
$$ 
in which case the infinitesimal symmetry is given by the contact Hamiltonian vector field $V_f = fT + i f^1 Z_1 - i f^{\oneb}Z_{\oneb}$ with potential $f$; see, e.g., \cite{CE2018-obstruction-flatI}. For our purposes, it is the existence of complex solutions of this equation that will turn out to be essential, and this is intimately related with the question of local embeddability of the CR manifold $(M,H,J)$ in $\mathbb{C}^2$.
\begin{proposition}\label{prop:embeddability}
Let $(M,H,J)$ be a strictly pseudoconvex CR manifold and $p\in M$. The following are equivalent:
\begin{itemize}
\item[(i)] $(M,H,J)$ is embeddable in a neighborhood of $p$.
\item[(ii)] There is a nonvanishing complex solution $u$ of the equation 
\begin{equation*}
\nabla_1\nabla_1 u + iA_{11}u =0
\end{equation*}
 on weight $(1,1)$ densities in a neighborhood of $p$.
\end{itemize}
\end{proposition}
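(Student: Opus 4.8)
The plan is to prove the two implications of \cref{prop:embeddability} separately; the direction (i) $\Rightarrow$ (ii) is elementary, and (ii) $\Rightarrow$ (i) carries the content.

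For (i) $\Rightarrow$ (ii): realize a neighborhood of $p$ in $M$ as a smooth strictly pseudoconvex hypersurface $\{\rho = 0\}$ in $\mathbb{C}^2$. Since $d\rho \neq 0$ we may, after relabeling the holomorphic coordinates, assume that $\partial \rho/\partial z^2(p) \neq 0$, so that the holomorphic coordinate vector field $X = \partial/\partial z^2$ is transverse to $M$ at $p$ and hence satisfies $X\rho \neq 0$ on a neighborhood of $p$. Then $u := \overline{iX\rho}|_M$ is a nowhere vanishing complex function which, because of its transformation behaviour under rescalings of the defining function, is a well-defined nonvanishing section of $\cE(1,1)$. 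A direct computation with a contact form volume-normalized against a suitable section of $\scrK$, using that $X$ is holomorphic (so that all $\bar\partial$-derivatives of its components vanish), shows that $u$ satisfies \cref{eqn:infinitesimal-automorphism-equation}. This is the easy half of the correspondence between ambient holomorphic vector fields and solutions of the infinitesimal automorphism equation developed in \cref{sec:existence-of-solutions} (see \cref{lem:solution-from-holomorphic-vf}).

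For (ii) $\Rightarrow$ (i): fix a contact form $\theta$ near $p$ and view the nonvanishing solution $u$ as a complex function via the induced trivialization. Attach to $u$ the complex vector field $W = uT + i(\nabla^1 u)Z_1$ (the CR-invariant analogue of the contact Hamiltonian vector field of a real potential, normalized so that its Reeb component is $u$); for $M$ embedded, $W$ is exactly the restriction of the ambient holomorphic vector field from the previous paragraph. Since $u$ is nonvanishing, $W$ is transverse to the contact distribution $H$. Using the weighted commutation identities \cref{eqn:commuting-11bar-derivatives-on-densities,eqn:commuting-10-derivatives-on-densities}, one checks that the bracket of $W$ with a section of $T^{0,1}M$ has vanishing Reeb component automatically and that its $Z_1$-component is a nonzero multiple of $\nabla_1\nabla_1 u + iA_{11}u$. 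Thus \cref{eqn:infinitesimal-automorphism-equation} holds if and only if the rank $2$ complex distribution $D := \mathbb{C}W \oplus T^{0,1}M \subset \mathbb{C}TM$ is involutive. Granting this, $D + \overline{D} = \mathbb{C}TM$ (because $W$ is transverse to $H$), while $D \cap \overline{D}$ has rank $1$ — it cannot have rank $2$, for then $D = \overline{D} \supseteq \mathbb{C}H$, contradicting the non-integrability of $H$. These are precisely the hypotheses of Nirenberg's complex Frobenius theorem, which provides local coordinates $(z,t) \in \mathbb{C}\times\mathbb{R}$ near $p$ with $D = \mathrm{span}\{\partial/\partial\bar z,\ \partial/\partial t\}$; in particular $z$ is a CR function with $dz|_H \neq 0$. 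To obtain a genuine local embedding one carries out the analogous construction on the product $M \times \mathbb{R}$ with coordinate $r$: declare the holomorphic tangent bundle along $M\times\{0\}$ to be $T^{1,0}M \oplus \mathbb{C}(W + i\partial_r)$, extend it to a neighborhood, and verify that integrability of the resulting almost complex structure reduces, order by order in $r$, once more to \cref{eqn:infinitesimal-automorphism-equation}; Newlander--Nirenberg then identifies a neighborhood of $(p,0)$ with a domain in $\mathbb{C}^2$ inside which $M$ sits as a real hypersurface, which is the desired local CR embedding (equivalently, this supplies the second CR function).

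I expect the main obstacle to be the last step of (ii) $\Rightarrow$ (i): checking that \cref{eqn:infinitesimal-automorphism-equation} is exactly the integrability condition and handling the extension off $M$. This step is genuinely unavoidable, as witnessed by Nirenberg's non-embeddable strictly pseudoconvex CR $3$-manifold, which by \cref{prop:embeddability} can carry no nonvanishing weight $(1,1)$ solution of the infinitesimal automorphism equation.
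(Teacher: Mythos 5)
Your reduction is essentially the paper's: both arguments hinge on converting the nonvanishing weight $(1,1)$ solution $u$ into a complex vector field $V$, transverse to $H$, with $\mathcal{L}_V\Gamma(T^{0,1}M)\subset\Gamma(T^{0,1}M)$, and conversely. The difference is that at the crucial juncture the paper simply invokes Jacobowitz's criterion (\cite[Proposition 2.1]{Jacobowitz1987}): such a $V$ exists near $p$ if and only if $(M,H,J)$ is embeddable near $p$. You instead attempt to reprove that criterion, and that is where the gaps lie. Two smaller points first: (1) there is a conjugation slip --- the vector field whose Lie derivative preserves $\Gamma(T^{0,1}M)$ is $V=fT+i(\nabla^1f)Z_1$ with $f=\bar{u}$, not with Reeb component $u$; the $Z_1$-component of $[V,Z_{\oneb}]$ is $-i(\nabla_{\oneb}\nabla^{1}f-iA_{\oneb}{}^{1}f)$, whose vanishing is the equation for $u=\bar f$, not for $f$. (2) In (i)$\Rightarrow$(ii) you should note that local embeddability of an abstract strictly pseudoconvex CR $3$-manifold lets you realize it as a hypersurface in $\mathbb{C}^2$ before taking $X=\partial/\partial z^2$; granted that, this half matches \cref{lem:solution-from-holomorphic-vf}.

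The genuine gap is the $M\times\mathbb{R}$ step of (ii)$\Rightarrow$(i), which you leave as ``verify that integrability reduces, order by order in $r$, to \cref{eqn:infinitesimal-automorphism-equation}.'' Two problems. First, your specific candidate $(1,0)$-bundle $T^{1,0}M\oplus\mathbb{C}(W+i\partial_r)$ need not be complementary to its conjugate: modulo $\mathbb{C}H$ the vectors $uT+i\partial_r$ and $\bar{u}T-i\partial_r$ are proportional wherever $\mathrm{Re}\,u=0$, so no almost complex structure is defined there even though $u\neq0$; one must replace $i$ by a constant $a$ with $\arg a\not\equiv\arg u(p)\pmod{\pi}$ (harmless locally, but it must be said). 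Second, no order-by-order analysis in $r$ is needed or available: the correct and complete argument is to extend $V$ and $T^{0,1}M$ to be $r$-independent and observe that $T^{0,1}M\oplus\mathbb{C}(V+a\partial_r)$ is then involutive \emph{exactly} because $[V,\Gamma(T^{0,1}M)]\subset\Gamma(T^{0,1}M)$ and $a$ is constant, after which Newlander--Nirenberg applies and $M\times\{0\}$ inherits the given CR structure. As written, the key step of the hard implication is asserted rather than proved, and the proof strategy described for it (extension order by order in $r$) is not the one that works. If you do not want to carry this out, the honest course is to do what the paper does and cite Jacobowitz.
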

This proposition follows from the following three lemmas. 
\begin{lemma}\label{lem:Lie_V-of-Z_1}
Let $(M,H,J)$ be a strictly pseudoconvex CR manifold and $\theta$ a contact form for $H$. Let $f$ be a complex function on $M$ and let $V=fT+if^1Z_1$, where $T$ is the Reeb vector field of $\theta$, $Z_1$ spans $T^{1,0}M$, and $f^1=\nabla^1f$. Then
$$
\mathcal{L}_{V} Z_{\oneb} = -i(\nabla_{\oneb}\nabla^{1}f-iA_{\oneb}{}^{1}f)Z_1  \mod T^{0,1}M.
$$
In particular, if $\nabla_{\oneb}\nabla_{\oneb}f-iA_{\oneb\oneb}f = 0$, then $\mathcal{L}_{V} \Gamma(T^{0,1}M) \subset \Gamma(T^{0,1} M)$. 
\end{lemma}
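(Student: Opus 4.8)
The plan is to prove this by a direct computation of the Lie bracket $\mathcal{L}_{V}Z_{\oneb}=[V,Z_{\oneb}]$ in a local admissible frame $\{T,Z_1,Z_{\oneb}\}$ and then reading off its components in that basis. First I would expand, using $V=fT+if^1Z_1$ and the Leibniz rule $[gX,Y]=g[X,Y]-(Yg)X$, to get
$$
\mathcal{L}_{V}Z_{\oneb}=f\,[T,Z_{\oneb}]-(Z_{\oneb}f)\,T+if^1\,[Z_1,Z_{\oneb}]-i(Z_{\oneb}f^1)\,Z_1 .
$$
Next I would insert the two structure brackets. These are dictated by the structure equations \cref{eqn:h11-definition,eqn:pseudohermitian-connection1,eqn:pseudohermitian-connection2} together with $\nabla T=0$: one finds $[T,Z_{\oneb}]=\omega_{\oneb}{}^{\oneb}(T)Z_{\oneb}-A^{1}{}_{\oneb}Z_1$ and $[Z_1,Z_{\oneb}]=\omega_{\oneb}{}^{\oneb}(Z_1)Z_{\oneb}-\omega_{1}{}^{1}(Z_{\oneb})Z_1-ih_{1\oneb}T$.

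Then I would collect terms by frame component. The coefficient of $T$ works out to $-Z_{\oneb}f+f^1h_{1\oneb}$, which vanishes identically because $f^1=\nabla^1 f=h^{1\oneb}\nabla_{\oneb}f$; this cancellation is exactly what the choice of $(1,0)$-part $if^1Z_1$ for $V$ is engineered to produce, and is the main thing to get right. The coefficient of $Z_1$ I would simplify to
$$
-fA^{1}{}_{\oneb}-i\big(Z_{\oneb}f^1+\omega_{1}{}^{1}(Z_{\oneb})f^1\big)=-fA^{1}{}_{\oneb}-i\nabla_{\oneb}\nabla^{1}f=-i\big(\nabla_{\oneb}\nabla^{1}f-iA_{\oneb}{}^{1}f\big),
$$
using that $\nabla_{\oneb}$ acting on the upper-index object $f^1$ adds $\omega_{1}{}^{1}(Z_{\oneb})f^1$, and that $A^{1}{}_{\oneb}=A_{\oneb}{}^{1}$ (both equal $h^{1\oneb}A_{\oneb\oneb}$). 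All remaining terms are multiples of $Z_{\oneb}$, so modulo $T^{0,1}M$ this yields the claimed formula.

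For the final assertion I would use that $\nabla$ preserves the Levi form ($\nabla h^{1\oneb}=0$, equivalent to \cref{eqn:pseudohermitian-connection2}) to rewrite $\nabla_{\oneb}\nabla^{1}f-iA_{\oneb}{}^{1}f=h^{1\oneb}\big(\nabla_{\oneb}\nabla_{\oneb}f-iA_{\oneb\oneb}f\big)$. Hence the hypothesis $\nabla_{\oneb}\nabla_{\oneb}f-iA_{\oneb\oneb}f=0$ makes $\mathcal{L}_{V}Z_{\oneb}$ a section of $T^{0,1}M$; since $T^{0,1}M$ is the line bundle spanned by $Z_{\oneb}$ and $\mathcal{L}_{V}(gZ_{\oneb})=(Vg)Z_{\oneb}+g\,\mathcal{L}_{V}Z_{\oneb}$, this gives $\mathcal{L}_{V}\Gamma(T^{0,1}M)\subset\Gamma(T^{0,1}M)$. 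The only delicate point I anticipate is keeping the torsion sign conventions in $[T,Z_{\oneb}]$ and $[Z_1,Z_{\oneb}]$ consistent with the commutation identities for $\nabla$ recorded in \cref{sec:pseudohermitian-calculus} (the signs are pinned down by contracting those identities against $df$); there is no conceptual obstacle.
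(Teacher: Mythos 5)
Your proof is correct and is essentially the paper's argument in dual form: the paper computes $\mathcal{L}_V\theta^1$ and $\mathcal{L}_V\theta$ via Cartan's formula and the structure equations, then reads off $\theta^1(\mathcal{L}_VZ_{\oneb})=-(\mathcal{L}_V\theta^1)(Z_{\oneb})$, whereas you compute $[V,Z_{\oneb}]$ directly from the equivalent frame bracket identities. The key cancellations you identify (the vanishing $T$-component coming from $f^1=h^{1\oneb}\nabla_{\oneb}f$, and the assembly of $-i(\nabla_{\oneb}\nabla^{1}f-iA_{\oneb}{}^{1}f)$ in the $Z_1$-component) all check against the paper's conventions.
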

\begin{remark}
By $\Gamma(T^{0,1}M)$ we mean the space of smooth sections of $T^{0,1}M$.
\end{remark}
\begin{proof}
Let $(\theta, \theta^1,\theta^{\oneb})$ be the admissible coframe dual to $(T,Z_1,Z_{\oneb})$. Using the structure equation \cref{eqn:pseudohermitian-connection1} and $\theta^1(V)=if^1$ we have
\begin{align*}
\mathcal{L}_V \theta^1 &= V\hook \dee\theta^1 + \dee(V\hook \theta^1)\\
&= fA^1{}_{\oneb}\theta^{\oneb} +i(f^1{}_{\oneb}\theta^{\oneb}+f^1{}_1\theta^1+f^1{}_0\theta).
\end{align*}
So $\theta^1(\mathcal{L}_V Z_{\oneb}) = - (\mathcal{L}_V \theta^1)(Z_{\oneb}) = -i(\nabla_{\oneb}\nabla^{1}f-iA_{\oneb}{}^{1}f)$. Moreover, $\mathcal{L}_V \theta = V\hook \dee\theta + \dee(V\hook \theta) = -\bh_{1\oneb}f^1\theta^{\oneb} + df$, so that $\theta(\mathcal{L}_V Z_{\oneb}) = - (\mathcal{L}_V \theta)(Z_{\oneb}) = 0$, which gives the desired result.
\end{proof}

\begin{lemma} 
Let $(M,H,J)$ be a strictly pseudoconvex CR manifold, and let $V$ be a complex vector field on $M$ with the property that $\mathcal{L}_{V} \Gamma(T^{0,1}M) \subset \Gamma(T^{0,1} M)$. Then $V=fT+if^1Z_1\mod T^{0,1}M$ where $f^1=\nabla^1f$ and $u=\overline{f}$ satisfies the equation $\nabla_1\nabla_1 u + iA_{11}u =0$.
\end{lemma}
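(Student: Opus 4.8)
The plan is to run the computation of \cref{lem:Lie_V-of-Z_1} in reverse. Fix a contact form $\theta$ with admissible coframe $(\theta,\theta^1,\theta^{\oneb})$ dual to $(T,Z_1,Z_{\oneb})$, and write $V = fT + g^1 Z_1 + g^{\oneb}Z_{\oneb}$ with $f = \theta(V)$, $g^1 = \theta^1(V)$, $g^{\oneb} = \theta^{\oneb}(V)$. Because $M$ is three-dimensional, $T^{0,1}M$ is the line bundle cut out by $\theta = 0$ and $\theta^1 = 0$, so the hypothesis $\mathcal{L}_V Z_{\oneb}\in\Gamma(T^{0,1}M)$ is equivalent to the two scalar equations $\theta(\mathcal{L}_V Z_{\oneb}) = 0$ and $\theta^1(\mathcal{L}_V Z_{\oneb}) = 0$; since $\theta(Z_{\oneb}) = \theta^1(Z_{\oneb}) = 0$, these say $(\mathcal{L}_V\theta)(Z_{\oneb}) = 0$ and $(\mathcal{L}_V\theta^1)(Z_{\oneb}) = 0$.

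First I would use the $\theta$-equation to pin down the $(1,0)$-part of $V$. By Cartan's formula and \cref{eqn:h11-definition},
\[
\mathcal{L}_V\theta \;=\; V\hook\dee\theta + \dee(V\hook\theta) \;=\; i\bh_{1\oneb}\bigl(g^1\theta^{\oneb} - g^{\oneb}\theta^1\bigr) + \dee f,
\]
so pairing with $Z_{\oneb}$ gives $0 = i\bh_{1\oneb}g^1 + \nabla_{\oneb}f$, i.e.\ $g^1 = i\bh^{1\oneb}\nabla_{\oneb}f = i\nabla^1 f = if^1$. This is the first assertion of the lemma: $V = fT + if^1 Z_1 \mod T^{0,1}M$.

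Next I would feed in the $\theta^1$-equation. The $T^{0,1}M$-part $W := g^{\oneb}Z_{\oneb}$ of $V$ contributes $\mathcal{L}_W Z_{\oneb} = [g^{\oneb}Z_{\oneb},Z_{\oneb}] = -(Z_{\oneb}g^{\oneb})Z_{\oneb}$, which already lies in $\Gamma(T^{0,1}M)$ (here is where I use that $T^{0,1}M$ has rank one, so $[Z_{\oneb},Z_{\oneb}] = 0$); hence $\mathcal{L}_V Z_{\oneb}\equiv \mathcal{L}_{fT + if^1 Z_1}Z_{\oneb} \mod T^{0,1}M$, and \cref{lem:Lie_V-of-Z_1} identifies the right-hand side as $-i\bigl(\nabla_{\oneb}\nabla^1 f - iA_{\oneb}{}^1 f\bigr)Z_1 \mod T^{0,1}M$. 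The $\theta^1$-equation is precisely the vanishing of this $Z_1$-coefficient, $\nabla_{\oneb}\nabla^1 f - iA_{\oneb}{}^1 f = 0$; lowering the free index with $\bh_{1\oneb}$ (legitimate since $\nabla\bh = 0$) turns this into $\nabla_{\oneb}\nabla_{\oneb}f - iA_{\oneb\oneb}f = 0$, and taking complex conjugates, using $\overline{A_{\oneb\oneb}} = A_{11}$ and reality of $\nabla$, gives $\nabla_1\nabla_1 u + iA_{11}u = 0$ for $u = \overline{f}$.

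There is no genuine obstacle here: the argument is the proof of \cref{lem:Lie_V-of-Z_1} read backwards, and the only place the hypothesis is really used is that $T^{0,1}M$ is a line bundle, which is what makes ``$\mathcal{L}_V$ preserves $\Gamma(T^{0,1}M)$'' amount to two scalar equations and makes the $T^{0,1}M$-component of $V$ drop out of the conclusion. The single computation worth rechecking is the Cartan-formula expression for $\mathcal{L}_V\theta$ displayed above, but for $g^{\oneb} = 0$ it is exactly the one already carried out in the proof of \cref{lem:Lie_V-of-Z_1}.
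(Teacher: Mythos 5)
Your argument is correct and is exactly the ``straightforward calculation along the lines of the previous proof'' that the paper leaves to the reader: the $\theta$-component of the Lie-derivative condition forces $\theta^1(V)=if^1$, the rank-one observation kills the $T^{0,1}M$-part of $V$, and the $\theta^1$-component together with \cref{lem:Lie_V-of-Z_1} yields $\nabla_{\oneb}\nabla_{\oneb}f-iA_{\oneb\oneb}f=0$, whose conjugate is the stated equation for $u=\overline{f}$. The signs and the use of Cartan's formula all check out against the computation already displayed in the proof of \cref{lem:Lie_V-of-Z_1}.
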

\begin{proof} This is a straightforward calculation along the lines of the previous proof.
\end{proof}
The following lemma of Jacobowitz \cite{Jacobowitz1987} now establishes \cref{prop:embeddability}.
\begin{lemma}{\cite[Proposition 2.1]{Jacobowitz1987}} Let $(M,H,J)$ be a strictly pseudoconvex CR manifold. The following are equivalent:
\begin{itemize}
\item[(i)] $(M,H,J)$ is embeddable in a neighborhood of the point $p$.
\item[(ii)] There exists a vector field $V$ with $\mathcal{L}_{V} \Gamma(T^{0,1}M) \subset \Gamma(T^{0,1} M)$ and  $V\notin T^{1,0}M\oplus T^{0,1}M$ in a neighborhood of $p$.
\end{itemize}
\end{lemma}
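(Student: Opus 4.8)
The plan is to deduce both implications from the Newlander--Nirenberg integrability theorem, by suspending $M$ to an almost complex manifold of one higher dimension and reading the vector field $V$ off the suspended $(0,1)$-bundle.

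For the implication $(ii)\Rightarrow(i)$: fix a nonvanishing local section $\overline{L}$ of $T^{0,1}M$ near $p$ and a contact form, so that $\mathbb{C}TM=T^{1,0}M\oplus T^{0,1}M\oplus\mathbb{C}T$. Since $V\notin T^{1,0}M\oplus T^{0,1}M$, its Reeb component $V^0$ is nonvanishing near $p$; after replacing $V$ by $\zeta V$ for a suitable nonzero constant $\zeta\in\mathbb{C}$ and shrinking, we may assume $\mathrm{Re}\,V^0$ is nowhere zero (this does not disturb the hypothesis $\mathcal{L}_V\Gamma(T^{0,1}M)\subset\Gamma(T^{0,1}M)$, as $[\zeta V,\overline{L}]=\zeta[V,\overline{L}]$). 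On $M\times\mathbb{R}$, with coordinate $r$ on the second factor, I would consider the complex rank-two subbundle $E:=\mathrm{span}_{\mathbb{C}}\{\overline{L},\ V-i\partial_r\}\subset\mathbb{C}T(M\times\mathbb{R})$, with $\overline{L},V$ regarded as $r$-independent. A short computation shows $E\cap\overline{E}=0$ near $(p,0)$ precisely because $\mathrm{Re}\,V^0\neq0$, so $E$ is the $(0,1)$-bundle of an almost complex structure there. This structure is integrable: $\overline{L}$ and $V$ commute with $\partial_r$, so $[\overline{L},V-i\partial_r]=[\overline{L},V]=-[V,\overline{L}]\in\Gamma(T^{0,1}M)=\Gamma(\mathrm{span}\{\overline{L}\})\subset\Gamma(E)$, whence $E$ is involutive. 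By Newlander--Nirenberg, $M\times\mathbb{R}$ near $(p,0)$ carries holomorphic coordinates and is biholomorphic to an open set in $\mathbb{C}^2$; there $M\cong M\times\{0\}$ becomes a real hypersurface whose inherited CR structure is $E|_{M\times\{0\}}\cap\mathbb{C}T(M\times\{0\})=\mathrm{span}\{\overline{L}\}=T^{0,1}M$ (a combination $a\overline{L}+b(V-i\partial_r)$ is tangent to the slice only if $b=0$), i.e.\ the original structure. This is the sought local embedding; strict pseudoconvexity, being intrinsic, comes along automatically.

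For the implication $(i)\Rightarrow(ii)$: I would run this backwards. Given a local CR embedding $\Phi$ of $M$ near $p$ onto a hypersurface $M'\subset\mathbb{C}^2$, set $q=\Phi(p)$ and pick, after an affine coordinate change and shrinking, a holomorphic $(1,0)$ vector field $X$ near $q$ whose real part is transverse to $M'$ --- possible because $v\mapsto\mathrm{Re}\,v$ is a real-linear isomorphism $T^{1,0}_q\mathbb{C}^2\to T_q\mathbb{C}^2$ while $T_qM'$ is a proper subspace. Since $X$ is holomorphic, the time-$r$ flow $\phi_r$ of $\mathrm{Re}\,X$ is a local biholomorphism, so $(x,r)\mapsto\phi_r(x)$ gives a diffeomorphism of $M'\times(-\epsilon,\epsilon)$ onto a neighborhood of $q$ transporting the ambient (integrable) complex structure to an integrable almost complex structure $J'$ on $M'\times(-\epsilon,\epsilon)$ with $\partial_r\mapsto\mathrm{Re}\,X$. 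Because each $\phi_r$ is a biholomorphism, $J'$ induces on every slice $M'\times\{r\}$ the \emph{same} copy of $T^{0,1}M'$, so its $(0,1)$-bundle has the form $\mathrm{span}\{\overline{L},\ V_r-i\partial_r\}$ with $\overline{L}$ an $r$-independent frame of $T^{0,1}M'$ and $V_r$ a section of $\mathbb{C}TM'$ (after normalizing the $\partial_r$-coefficient). Involutivity of this bundle (Newlander--Nirenberg for $J'$) together with $[\overline{L},V_r-i\partial_r]=[\overline{L},V_r]$ forces, on comparing $\partial_r$-components, $[\overline{L},V_r]\in\Gamma(T^{0,1}M')$ for all $r$; and the requirement that $J'$ be an almost complex structure (its $(0,1)$- and $(1,0)$-bundles meeting only in $0$) forces $V_r\notin T^{1,0}M'\oplus T^{0,1}M'$. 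Pulling $V_0$ back through $\Phi$ yields the desired $V$ on $M$.

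The single substantive ingredient is the Newlander--Nirenberg theorem, which supplies the integrability in both directions; the rest is bookkeeping. The point in that bookkeeping I expect to require the most care is ensuring that $[V,\overline{L}]$ lands in the \emph{rank-one} bundle $T^{0,1}M$, and not merely in some larger involutive bundle --- which is exactly why the second generator of the suspended $(0,1)$-bundle must be taken in the form $V-i\partial_r$ and why the $\partial_r$-component must be tracked throughout the computation.
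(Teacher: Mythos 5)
Your argument is correct. The paper itself gives no proof of this lemma---it is quoted directly from Jacobowitz \cite{Jacobowitz1987}---and your suspension of $M$ to $M\times\mathbb{R}$ with $(0,1)$-bundle $\mathrm{span}\{\overline{L},\,V-i\partial_r\}$, followed by Newlander--Nirenberg, is essentially the standard argument given there, including the two points that genuinely need care (rescaling $V$ by a complex constant so that $\mathrm{Re}\,\theta(V)\neq 0$, which is exactly the condition for $E\cap\overline{E}=0$, and tracking the $\partial_r$-component to see that the bracket condition lands in the rank-one bundle $T^{0,1}M$).
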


\section{Existence of global complex solutions to the infinitesimal automorphism equation}\label{sec:existence-of-solutions}

While the space of real solutions to the infinitesimal automorphism equation on an embedded strictly pseudoconvex CR manifold is finite dimensional, and generically consists only of the zero solution, the space of complex solutions is always infinite dimensional (corresponding to the infinite dimensional space of ambient holomorphic vector fields). Let $M\subset\mathbb{C}^2$ be a strictly pseudoconvex hypersurface and $\theta$ a contact form for $M$. Given a section $X\in \Gamma(T^{1,0}\mathbb{C}^2|_M)$, expressed in terms of an admissible frame by  
$$
X = X^0 \xi  + X^1Z_1
$$
with $\xi=\frac{1}{2}(T-iJT)$ where here $J$ denotes the standard complex structure on $\mathbb{C}^2$, we define the complex vector field $X_M$ on $M$ by
\begin{equation}\label{eqn:X_M}
X_M = X^0 T + X^1Z_1.
\end{equation}
Note that the vector field $X_M$ depends on the choice of $\theta$, but is well defined modulo $T^{0,1}M$.
\begin{lemma}\label{lem:solution-from-holomorphic-vf}
Let $M\subset\mathbb{C}^2$ be a strictly pseudoconvex hypersurface and $\theta$ a contact form for $M$. If $X$ is a holomorphic vector field defined on a neighborhood of $M$ then $X_M=fT+if^1 Z_1$ where $u=\overline{f}$ satisfies
$$\nabla_1\nabla_1 u +iA_{11} u =0.$$ 
\end{lemma}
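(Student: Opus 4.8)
The plan is to recognize this as essentially a corollary of the two preceding lemmas together with the observation that a holomorphic vector field on $\mathbb{C}^2$, restricted to $M$, gives a complex vector field whose flow preserves the CR structure. First I would recall that if $X$ is a $(1,0)$-vector field on a neighborhood of $M$ that is holomorphic, then for any antiholomorphic vector field $W$ on $\mathbb{C}^2$ one has $[X,W]$ is again antiholomorphic (since the bracket of a holomorphic and an antiholomorphic field on a complex manifold has no $(1,0)$-component, as the structure is integrable and $X$ holomorphic means $\mathcal{L}_X$ preserves $T^{1,0}\mathbb{C}^2$, hence also preserves $T^{0,1}\mathbb{C}^2$). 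Now $T^{0,1}M = T^{0,1}\mathbb{C}^2|_M \cap \mathbb{C}TM$, and $\Gamma(T^{0,1}M)$ is spanned over $C^\infty(M)$ by restrictions to $M$ of antiholomorphic vector fields tangent to $M$ along $M$; using $Z_{\oneb}$ extended to a local antiholomorphic frame one checks that $\mathcal{L}_{X_M}$ sends such fields into $\Gamma(T^{0,1}M)$, because $X - X_M \in \Gamma(T^{0,1}\mathbb{C}^2|_M)$ by construction (the difference is $X^0(\xi - T) = -\tfrac{i}{2}X^0 JT$, which lies in $T^{0,1}\mathbb{C}^2$) and brackets with antiholomorphic fields of $(0,1)$-fields stay $(0,1)$, while the mod-$T^{0,1}M$ ambiguity in $X_M$ is immaterial.

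Having established $\mathcal{L}_{X_M}\Gamma(T^{0,1}M)\subset \Gamma(T^{0,1}M)$, I would then invoke the second (unnumbered) lemma above verbatim: it says precisely that any complex vector field $V$ with this property can be written $V = fT + if^1 Z_1 \bmod T^{0,1}M$ with $f^1 = \nabla^1 f$, and that $u = \overline{f}$ solves $\nabla_1\nabla_1 u + iA_{11}u = 0$. Applying this with $V = X_M$ identifies $f = X^0$ (after checking the $Z_1$-component of $X_M$ is $i\nabla^1 f$, which is forced by that lemma) and yields the claimed PDE for $u = \overline{X^0} = \overline{iX\rho}$ — the last identification being the content of the parenthetical remark before the definition, obtained by writing $\xi\rho$ in terms of a defining function.

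The main obstacle — really the only point requiring care — is the interface between the ambient complex geometry and the intrinsic CR data: namely verifying cleanly that $\mathcal{L}_{X_M}$ preserves $\Gamma(T^{0,1}M)$ even though $X_M$ is only defined modulo $T^{0,1}M$ and even though $X_M$ itself is generally not tangent to any complex submanifold. I would handle this by working with a local defining function $\rho$, extending $Z_{\oneb}$ to an antiholomorphic frame field $\tilde Z_{\oneb}$ near $M$, and computing $[X, \tilde Z_{\oneb}]$ explicitly: holomorphicity of $X$ gives that this bracket is antiholomorphic, and a short computation shows its restriction to $M$ differs from $[X_M, Z_{\oneb}]$ by a section of $T^{0,1}\mathbb{C}^2|_M$ that is in fact tangent to $M$, hence in $T^{0,1}M$. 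One must also check the coefficient functions behave (that $X(\rho)$ restricted to $M$ controls the normal component), but this is exactly the relation $u = \overline{iX\rho}$ and is routine. Everything else is then an appeal to the already-proven lemmas, so the proof should be short.
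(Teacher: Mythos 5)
Your proof is correct, but it takes a genuinely different route from the paper's. The paper argues via deformation theory: the flow of $\mathrm{Re}\,X$ consists of ambient biholomorphisms, hence induces a \emph{trivial} deformation of $M$ whose variational field is $X_M \bmod T^{0,1}M$; by Lemma 4.5 of \cite{CE2018-obstruction-flatI} the infinitesimal deformation tensor of this family is $\varphi_{11}=i(\nabla_1\nabla_1 u+iA_{11}u)$, and triviality forces $\varphi_{11}=0$. You instead prove directly that $\mathcal{L}_{X_M}\Gamma(T^{0,1}M)\subset\Gamma(T^{0,1}M)$ from ambient holomorphy --- writing $X_M=X|_M+X^0\bar{\xi}$ with $\bar{\xi}\in T^{0,1}\mathbb{C}^2$, using integrability of $T^{0,1}\mathbb{C}^2$ and the tangency of $[X_M,W]$ to $M$ --- and then invoke the unnumbered converse to \cref{lem:Lie_V-of-Z_1}; this runs in the opposite direction to the implication the paper records \emph{after} the lemma (where the Lie-bracket property is deduced \emph{from} \cref{lem:solution-from-holomorphic-vf}). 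Your route is more self-contained within \cref{sec:pseudohermitian-calculus} and avoids importing the deformation formalism from the prequel, at the cost of resting its computational weight on the unnumbered lemma, whose proof the paper only sketches as ``a straightforward calculation along the lines of the previous proof.'' One small slip: $\xi-T=-\tfrac{1}{2}(T+iJT)=-\bar{\xi}$, not $-\tfrac{i}{2}JT$; the real vector $JT$ is not of type $(0,1)$, but $-\bar{\xi}$ is, so your conclusion that $X|_M-X_M$ is a section of $T^{0,1}\mathbb{C}^2|_M$ stands and the rest of the argument is unaffected.
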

\begin{proof}
Let $\phi_t$ denote the flow of $\mathrm{Re}\,X$, and let $\psi_t=\phi_t|_M$ be the resulting parametrized deformation of $M\subset \mathbb{C}^2$, which is trivial since the $\phi_t$ are biholomorphic. Then the variational vector field (interpreted as a section of $T_{(1,0)}M=\mathbb{C}TM/T^{0,1}M$) is given by $\dot{\psi} = X_M \;\mathrm{mod}\; T^{0,1}$. Hence, by Lemma 4.5 of \cite{CE2018-obstruction-flatI}, we have $X_M = fT+if^1 Z_1$ where $f=\theta(X_M)$ and the infinitesimal deformation tensor $\varphi_{11}$ is given by $\varphi_{11}=i(\nabla_1\nabla_1 u +iA_{11} u)$ with $u=\overline{f}$. But $\psi_t$ is a trivial deformation, so $\varphi_{11}=0$. The result follows.
\end{proof}
It follows from \cref{lem:Lie_V-of-Z_1} and \cref{lem:solution-from-holomorphic-vf} that if $X$ is a holomorphic vector field then $V=X_M$ satisfies $\mathcal{L}_{V} \Gamma(T^{0,1}M) \subset \Gamma(T^{0,1} M)$.

\begin{lemma}\label{lem:holomorphic-vf-from-solution}
Let $M\subset\mathbb{C}^2$ be a strictly pseudoconvex hypersurface. Suppose $u$ is a weight $(1,1)$ complex solution to $\nabla_1\nabla_1 u +iA_{11} u =0$. Then there is an ambient $(1,0)$-vector field $X=a^1\frac{\partial}{\partial z^1} + a^2\frac{\partial}{\partial z^2}$ along $M$ with CR coefficients $a^1$, $a^2$, such that $u=\overline{\btheta(X_M)}$. 
\end{lemma}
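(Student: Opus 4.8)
The plan is to run the construction behind \cref{lem:solution-from-holomorphic-vf} in reverse: from the solution $u$ I will write down an explicit ambient $(1,0)$-field $X$ along $M$, built out of $u$ and the Tanaka--Webster data of a chosen contact form, arrange that $X_M$ recovers the contact Hamiltonian field associated with $u$, and then verify that the coefficients of $X$ are CR functions by invoking \cref{lem:Lie_V-of-Z_1}.

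First I would fix a positively oriented contact form $\theta$ and use it to trivialize $\cE(1,1)=TM/H$, so that $u$ is identified with a complex function; write $f$ for the function representing $\bar u$. Conjugating $\nabla_1\nabla_1 u+iA_{11}u=0$, and using that the Tanaka--Webster connection commutes with conjugation while $\overline{A_{11}}=A_{\oneb\oneb}$, gives $\nabla_{\oneb}\nabla_{\oneb}f-iA_{\oneb\oneb}f=0$. By \cref{lem:Lie_V-of-Z_1} the complex vector field $V:=fT+if^1Z_1$ on $M$ (with $f^1=\nabla^1 f$, so that $if^1Z_1$ is the global section $i\nabla^1 f$ of $T^{1,0}M$) then satisfies $\mathcal{L}_V\Gamma(T^{0,1}M)\subset\Gamma(T^{0,1}M)$.

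Next I would simply \emph{define} $X:=f\,\xi+if^1Z_1$, with $\xi:=\tfrac12(T-iJT)$ and $J$ the standard complex structure of $\mathbb{C}^2$. Because $T$ is a real vector field along $M$, $\xi$ is a section of $T^{1,0}\mathbb{C}^2|_M$, and $i\nabla^1 f$ is a section of $T^{1,0}M\subset T^{1,0}\mathbb{C}^2|_M$; hence $X$ is a section of $T^{1,0}\mathbb{C}^2|_M$, i.e.\ $X=a^1\tfrac{\partial}{\partial z^1}+a^2\tfrac{\partial}{\partial z^2}$ for smooth complex functions $a^1,a^2$ on $M$. Comparing with \cref{eqn:X_M} --- equivalently, using $\xi+\bar\xi=T$ --- one reads off at once that $X_M=fT+if^1Z_1=V$ and $X_M-X=f\bar\xi$. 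In particular $\btheta(X_M)$ is represented by $f$ in the $\theta$-trivialization, so $\btheta(X_M)=\bar u$ as a density and $u=\overline{\btheta(X_M)}$; this identity is independent of the choice of $\theta$ since $\btheta$ is canonical. Thus everything reduces to showing that $a^1,a^2$ are CR, i.e.\ $Z_{\oneb}a^j=0$ for a local frame $Z_{\oneb}$ of $T^{0,1}M$.

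The mechanism for this is the Lie-derivative condition from the second paragraph. Expressed in the global frame $\{\tfrac{\partial}{\partial z^k},\tfrac{\partial}{\partial\bar z^k}\}$ of $\mathbb{C}\otimes T\mathbb{C}^2$, the $\tfrac{\partial}{\partial z^j}$-components of $X_M$ are exactly $a^1,a^2$ (since $X_M-X=f\bar\xi$ has type $(0,1)$), whereas the $\tfrac{\partial}{\partial z^j}$-components of $Z_{\oneb}$ vanish identically on $M$. Both $X_M$ and $Z_{\oneb}$ are tangent to $M$, so $[X_M,Z_{\oneb}]$ is intrinsic and may be computed by extending both fields off $M$; its $\tfrac{\partial}{\partial z^j}$-component comes out to be $-Z_{\oneb}a^j$, the would-be cross term (involving an extension of $Z_{\oneb}$) dropping out precisely because $Z_{\oneb}$ has vanishing $\tfrac{\partial}{\partial z^j}$-coefficient on \emph{all} of $M$ and $X_M$ is tangent to $M$. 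Since $[X_M,Z_{\oneb}]=\mathcal{L}_{X_M}Z_{\oneb}\in\Gamma(T^{0,1}M)\subset\Gamma(T^{0,1}\mathbb{C}^2|_M)$ is of type $(0,1)$, all its $\tfrac{\partial}{\partial z^j}$-components vanish, forcing $Z_{\oneb}a^1=Z_{\oneb}a^2=0$. I expect this last bookkeeping --- keeping the bracket intrinsic to $M$ and correctly identifying which terms survive on $M$ --- to be the only genuinely delicate point; the remainder is the (by now routine) translation between ambient $(1,0)$-data and the infinitesimal automorphism equation.
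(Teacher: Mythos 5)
Your construction of $X$ is exactly the paper's (set $f=\bar u$, take $X=f\xi+if^1Z_1$, so that $X_M=fT+if^1Z_1$ and $\btheta(X_M)=f$); the only place you diverge is in verifying that $a^1,a^2$ are CR. The paper does this by a direct frame computation: it expands $Z_{\oneb}a^k=Z_{\oneb}\bigl(f\,Tz^k+if^1Z_1z^k\bigr)$ using the commutators $[Z_{\oneb},T]$ and $[Z_{\oneb},Z_1]$ obtained from the structure equations, and reads off $Z_{\oneb}a^k=i(\nabla_{\oneb}\nabla^1f-iA_{\oneb}{}^1f)Z_1z^k=0$. You instead quote \cref{lem:Lie_V-of-Z_1} to get $\mathcal{L}_{X_M}\Gamma(T^{0,1}M)\subset\Gamma(T^{0,1}M)$ and then extract $Z_{\oneb}a^j=0$ from the $\tfrac{\partial}{\partial z^j}$-component of the ambient bracket $[X_M,Z_{\oneb}]$, using that $dz^j(Z_{\oneb})$ vanishes identically on $M$ and that $X_M$ is tangent to $M$. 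That bracket bookkeeping is correct, and your reduction $\overline{\nabla_1\nabla_1u+iA_{11}u}=\nabla_{\oneb}\nabla_{\oneb}f-iA_{\oneb\oneb}f$ is the right way to feed $u$ into \cref{lem:Lie_V-of-Z_1}. The underlying computation is of course the same (it lives inside the proof of \cref{lem:Lie_V-of-Z_1}), but your packaging is a bit more invariant: it makes explicit that ``CR coefficients'' for an ambient $(1,0)$-field along $M$ is equivalent to the Lie-derivative condition of Jacobowitz's embeddability criterion, whereas the paper's version is more self-contained and produces the identity $Z_{\oneb}a^k=i(\nabla_{\oneb}\nabla^1f-iA_{\oneb}{}^1f)Z_1z^k$ explicitly (which it reuses in the remark following the lemma). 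Either route is complete; no gap.
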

\begin{proof}
Let $\theta$ be a contact form for $M$ and define $V=fT +if^1Z_1$ where $f=\overline{u}$, which we now think of as a complex function on $M$. Let $X$ be the ambient $(1,0)$-vector field along $M$ determined by $X_M=V$, where $X_M$ is defined as in \cref{eqn:X_M}. Then $X=a^1\frac{\partial}{\partial z^1} + a^2\frac{\partial}{\partial z^2}$ where 
$$
a^k=dz^k(X)=dz^k(X_M) = f\dee z^k(T) + if^1 \dee z^k(Z_1),
$$
for $k=1,2$. The result can then be gleaned from the proof of \cite[Lemma 4.5]{CE2018-obstruction-flatI}, but we include a proof here for the readers convenience. Noting that $\dee z^k(T)=Tz^k$ and $\dee z^k(Z_1) = Z_1 z^k$, we compute
\begin{align*}
Z_{\oneb} a^k &= (Z_{\oneb}f)Tz^k + f Z_{\oneb}Tz^k + i(Z_{\oneb}f^1)Z_1z^k + if^1 Z_{\oneb}Z_1z^k\\
\nonumber &= (Z_{\oneb}f)Tz^k + f [Z_{\oneb},T]z^k + i(Z_{\oneb}f^1)Z_1z^k + if^1 [Z_{\oneb},Z_1]z^k.
\end{align*}
From the structure equations \cref{eqn:h11-definition} and \cref{eqn:pseudohermitian-connection1} it is straightforward to compute that
\begin{equation*}
[Z_{\oneb},Z_1]= ih_{1\oneb}T + \omega_1{}^1(Z_{\oneb})Z_1 - \omega_{\oneb}{}^{\oneb}(Z_1)Z_{\oneb},
\quad \mathrm{and} \quad
[Z_{\oneb},T] = A^1{}_{\oneb}Z_1 - \omega_{\oneb}{}^{\oneb}(T)Z_{\oneb}.
\end{equation*}
We therefore obtain
\begin{align}\label{eqn:Zoneb-ak}
Z_{\oneb} a^k &= (Z_{\oneb}f - h_{1\oneb}f^1)Tz^k + (iZ_{\oneb}f^1 + f A^1{}_{\oneb} + if^1\omega_1{}^1(Z_{\oneb}) )Z_1z^k \\
\nonumber &= i(\nabla_{\oneb}\nabla^1 f - iA_{\oneb}{}^1f) Z_1z^k =0
\end{align}
for $k=1,2$.
\end{proof}
\begin{remark}
From the first line of \cref{eqn:Zoneb-ak} above and the independence of the vectors $T$ and $Z_1$ it follows that in the statement of \cref{lem:solution-from-holomorphic-vf} it is sufficient for the $(1,0)$-vector field to be defined only on $M$ and have CR coefficients $a^1$ and $a^2$.
\end{remark}

\begin{remark}
Note that by \cref{prop:embeddability,lem:holomorphic-vf-from-solution} if $(M,H,J)$ is an abstract CR $3$-manifold, and $u$ is a complex solution of the infinitesimal automorphism equation which does not vanish at some point, then locally $M$ can be realized as a hypersurface in $\mathbb{C}^2$ such that $f=\bar{u}$ is the complex Reeb component of a holomorphic vector field. More precisely, if $(M,H,J)$ is an abstract CR $3$-manifold, and $u$ is a complex solution of the infinitesimal automorphism equation with $u(p)\neq 0$, then there is a CR embedding $\psi$ from a neighborhood $U$ of $p$ into $\mathbb{C}^2$ and a holomorphic vector field $X$ on the pseudoconvex side of $\psi(U)$ extending smoothly to $\psi(U)$ such that $\btheta(X_{\psi(U)})=\bar{u}$.
\end{remark}

\section{CR sections of the complexified adjoint tractor bundle} \label{sec:tractor-calculus-and-prolongation}

\subsection{Tractor calculus}

Here we recall the setup for the CR tractor calculus in $3$-dimensions, as in \cite{CE2018-obstruction-flatI}. Let $\mathbb{C}^{2,1}$ denote the defining representation of $\mathrm{SU}(2,1)$. Let $P$ be the subgroup of $G=\mathrm{SU}(2,1)$ stabilizing a fixed isotropic line $\ell$ in $\mathbb{C}^{2,1}$. Let $(M,H,J)$ be a CR $3$-manifold and let $(\mathcal{G}\to M,\omega)$ be the canonical Cartan geometry of type $(G,P)$ corresponding to the CR structure on $M$. If $\mathbb{V}$ is an irreducible representation of $\mathrm{SU}(2,1)$ then the bundle $\mathcal{V}=\mathcal{G}\times_P \mathbb{V}$ is called a \emph{CR tractor bundle}. Note that every irreducible representation $\mathbb{V}$ of $\mathrm{SU}(2,1)$ is contained in some tensor representation constructed from $\mathbb{C}^{2,1}$ and $(\mathbb{C}^{2,1})^*$ as a subspace of tensors satisfying certain symmetries and the trace-free condition. It follows that knowledge of the so called \emph{(CR) standard tractor bundle} $\mathcal{T}=\mathcal{G}\times_P \mathbb{C}^{2,1}$ is sufficient to recover all of the tractor bundles. The standard tractor bundle $\mathcal{T}\to M$ should be thought of as a $P$-vector bundle, which is equivalent to saying that it is canonically equipped with a signature $(2,1)$ Hermitian bundle metric (since $P\subset \mathrm{SU}(2,1)$) and that the fibers of $\mathcal{T}$ are canonically filtered vector spaces
\begin{equation*}
\mathcal{T}_x^1 \subset \mathcal{T}_x^0 \subset  \mathcal{T}_x, \qquad x\in M 
\end{equation*}
where $\mathcal{T}_x^1$ is an isotropic line and $\mathcal{T}_x^0 = (\mathcal{T}_x^1)^{\perp}$ (since $P$ preserves the filtration $\ell \subset \ell^{\perp} \subset \mathbb{C}^{2,1}$). The $P$-principal Cartan bundle $\mathcal{G}\to M$ may readily be recovered from the standard tractor bundle as the bundle of $P$-adapted frames, that is, frames where the first frame vector is chosen from $\mathcal{T}^1$, the second from $\mathcal{T}^0$, and the frame is normalized so that the signature $(2,1)$ bundle metric takes the form
\begin{equation*}
\left(\begin{array}{ccc}
0 & 0 & 1\\
0 & 1 & 0\\
1 & 0 & 0
\end{array}\right).
\end{equation*}
Moreover, the canonical CR Cartan connection $\omega$ on $\mathcal{G}\to M$ may equivalently be viewed as a linear connection $\nabla$ on $\mathcal{T}\to M$, called the \emph{tractor connection}, which preserves the bundle metric on $\mathcal{T}\to M$. The tractor connection on the standard tractor bundle induces a linear connection on each tractor bundle in the obvious way. As in \cite{GoverGraham2005, CE2018-obstruction-flatI} we construct $(\mathcal{T},\nabla)$ without reference to $(\mathcal{G},\omega)$.

Following \cite{GoverGraham2005} we take $\cT$ to be the set of equivalence classes of pairs $(\theta, (\sigma,\mu^1,\rho))$, where $\theta$ is a contact form and $(\sigma,\mu^1,\rho)\in \mathcal{E}(0,1)\oplus\mathcal{E}^{1}(-1,0)\oplus\mathcal{E}(-1,0)$, under the equivalence relation: $(\theta, (\sigma,\mu^1,\rho))\sim (\thetah, (\hat{\sigma},\hat{\mu}^1,\hat{\rho}))$ if $\thetah=e^{\Ups}\theta$ and
\begin{equation}\label{TractorTransformation}
\left(\begin{array}{c}
\hat{\sigma}\\
\hat{\mu}^1\\
\hat{\rho}
\end{array}\right)
=
\left(
\begin{array}{ccc}
1 & 0 & 0\\
\Ups^{1} & 1 & 0 \\
-\frac{1}{2}(\Ups^{1}\Ups_{1}-i\Ups_0) & - \Ups_{1} & 1
\end{array}
\right)\left(\begin{array}{c}
\sigma\\
\mu^{1}\\
\rho
\end{array}\right)
\end{equation}
where $\Ups_1=\nabla_1 \Ups$, $\Ups^1=\bh^{1\oneb}\Ups_{\oneb}$ with $\Ups_{\oneb}=\nabla_{\oneb} \Ups$, and $\Ups_0 = \nabla_0 \Ups$. The canonical filtration of $\mathcal{T}$ is immediately evident, fixing a contact form $\theta$ this is given by
\begin{equation*}
\mathcal{T}^1
=
\left\{\left(\begin{array}{c}
0\\
0\\
*
\end{array}\right)\right\}
\subset
\mathcal{T}^0
=
\left\{\left(\begin{array}{c}
0\\
*\\
*
\end{array}\right)\right\}
\subset
\mathcal{T}.
\end{equation*}
If $(\theta, (\sigma,\mu^1,\rho))\sim (\thetah, (\hat{\sigma},\hat{\mu}^1,\hat{\rho}))$ then one easily checks that
\begin{equation*}
 2\hat{\sigma}\hat{\rho} + \hat{\mu}^1\hat{\mu}_1 = 2\sigma\rho + \mu^1\mu_1,
\end{equation*}
which defines by polarization a signature $(2,1)$ Hermitian bundle metric $h$ on $\cT$, called the \emph{tractor metric}.
We will adopt the abstract index notation $\cE^A$ for $\mathcal{T}$, and $\cE^{\Ab}$ for $\overline{\mathcal{T}}$, using capitalized Latin letters from the start of the alphabet for our abstract indices. The tractor metric $h$ is then written as $h_{A\Bb}$. Decomposing $\cE^A$ with respect to any choice of contact form $\theta$, we have
\begin{equation}\label{eqn:tractor-metric}
h_{A\Bb} = \left(
\begin{array}{ccc}
0 & 0 & 1\\
0 & \bh_{1\oneb} & 0 \\
1 & 0 & 0
\end{array}
\right).
\end{equation}

The line bundle $\cE(-1,0)$ is naturally included in $\cT$ by the map
\begin{equation*}
\rho \mapsto \left(\begin{array}{c}
0\\
0\\
\rho
\end{array}\right)
\end{equation*}
and is this naturally identified with $\cT^1$. The map $\cE(-1,0)\to\cE^A$ corresponds to a canonical section $\boldsymbol{Z}^A$ of $\cE^A\otimes\cE(1,0)$, known as the \emph{canonical tractor}. Bearing in mind that $\boldsymbol{Z}^A$ is a weighted tractor, we may write
\begin{equation*}
\boldsymbol{Z}^A \overset{\theta}{=} \left(\begin{array}{c}
0\\
0\\
1
\end{array}\right)
\end{equation*}
for any contact form $\theta$. The canonical tractor also induces a canonical projection $\cE^A \to \cE(0,1)$ taking $v^A$ to $\sigma = h_{A\bar{B}}v^A\boldsymbol{Z}^{\bar{B}}$. This corresponds to the obvious projection
\begin{equation*}
\left(\begin{array}{c}
\sigma\\
\mu^1\\
\rho
\end{array}\right)
\mapsto \sigma.
\end{equation*}

If $M$ is a strictly pseudoconvex hypersurface in $\mathbb{CP}^2$, then $\cE(-1,0)$ is the restriction of the tautological line bundle $\mathcal{O}(-1)$ to $M$ and $\cT=\cE^A$ can be identified with the restriction to $M$ of the tautological rank $3$ complex vector bundle over $\mathbb{CP}^2$ (coming from the projection $\mathbb{C}^3\setminus\{0\}\to\mathbb{CP}^2$). The canonical tractor can then be identified with the Euler field on $\mathbb{C}^3$, whence the notation $\boldsymbol{Z}$. From this point of view, however, the origins of the tractor metric $h$ and particularly of the tractor connection are more subtle.

\subsubsection*{The tractor connection}
In order to define the tractor connection, we define the higher order pseudohermitian curvatures 
\begin{equation}\label{eqn:T_1}
T_1 = \frac{1}{12}(\nabla_1 R - 4i\nabla^1A_{11}),
\end{equation}
a section of $\cE_1(-1,-1)$, and the real $(-2,-2)$ density
\begin{equation}\label{eqn:S}
S = -(\nabla^1T_1 + \nabla^{\oneb}T_{\oneb} + \frac{1}{16}R^2 - A^{11}A_{11}).
\end{equation}
With respect to a choice of contact form $\theta$ the tractor connection on a section $v^A \overset{\theta}{=} (\sigma,\mu^1,\rho)$ is then given by
\begin{equation}\label{TractorConnection1}
\nabla_1 v^{A}\overset{\theta}{=}
\left(\begin{array}{c}
\nabla_1\sigma\\
\nabla_1\mu^1 + \rho + \frac{1}{4} R \sigma\\
\nabla_1\rho-iA_{11}\mu^1-\sigma T_1
\end{array}\right),
\end{equation}
\begin{equation}\label{TractorConnection1b}
\nabla_{\oneb}v^{A}\overset{\theta}{=}
\left(\begin{array}{c}
\nabla_{\oneb}\sigma-\mu_{\oneb}\\
\nabla_{\oneb}\mu^1-iA_{\oneb}{^1}\sigma\\
\nabla_{\oneb}\rho-\frac{1}{4} R\mu_{\oneb}+\sigma T_{\oneb}
\end{array}\right),
\end{equation}
and
\begin{equation}\label{TractorConnection0}
\nabla_{0} v^{A}\overset{\theta}{=}
\left(\begin{array}{c}
\nabla_{0}\sigma-\frac{i}{12}R\sigma+i\rho\\
\nabla_{0}\mu^1+\frac{i}{6}R\mu^1-2i\sigma T^1\\
\nabla_{0}\rho-\frac{i}{12}R\rho-2iT_1\mu^1-iS\sigma
\end{array}\right).
\end{equation}
One can verify that these formulae give rise to a well-defined CR invariant connection on $\cE^A$ which preserves the tractor metric $h_{A\bar{B}}$ \cite{CapGover2008,GoverGraham2005}.

The tractor curvature $\kappa$ is a $2$-form valued in (trace free skew-Hermitian) endomorphisms of the standard tractor bundle. Given a choice of contact form $\theta$, $\kappa$ may be decomposed into three components $\kappa_{1\oneb}{}_A{}^B$, $\kappa_{10}{}_A{}^B$, and $\kappa_{\oneb 0}{}_A{}^B$, defined by
\begin{align*}
\nabla_1\nabla_{\oneb} v^B -\nabla_{\oneb}\nabla_1 v^B + i \bh_{1\oneb}\nabla_0 v^B & = \kappa_{1\oneb A}{}^B v^A;\\
\nabla_1\nabla_{0} v^B - \nabla_{0}\nabla_1 v^B - A^{\oneb}{_{1}}\nabla_{\oneb}v^B &= \kappa_{10 A}{}^B v^A;\\
\nabla_{\oneb}\nabla_{0} v^B - \nabla_{0}\nabla_{\oneb} v^B - A^{1}{_{\oneb}}\nabla_{1}v^B &= \kappa_{\oneb 0A }{}^B v^A
\end{align*}
for any section $v^A$ of $\cE^A$ (the tractor connection is coupled with the Tanaka-Webster connection of $\theta$ in order to define the iterated covariant derivatives). By definition the component $\kappa_{1\oneb A}{}^B$ of the tractor curvature is a CR invariant, i.e.\ it does not depend on the choice of $\theta$. However, a straightforward calculation shows that $\kappa_{1\oneb}{}_A{}^B=0$. The vanishing of $\kappa_{1\oneb}{}_A{}^B$ implies that $\kappa_{10 A}{}^B$ and $\kappa_{\oneb 0 A}{}^B$ are CR invariant (this phenomenon is special to $3$-dimensional CR structures, cf.\ \cite{ChernMoser1974}). A straightforward calculation using the above formulae for the tractor connection, the formulae \cref{eqn:commuting-11bar-derivatives-on-densities,eqn:commuting-10-derivatives-on-densities} for the curvature of the density line bundles, and the definitions of $T_1$ and $S$, gives (cf.\ \cite{GoverGraham2005})
\begin{equation}\label{eqn:tractor-curvature}
\ka_{10 A}{}^B v^A \overset{\theta}{=} \left(
\begin{array}{ccc}
0 & 0 & 0\\
0 & 0 & 0 \\
Y_1 & iQ_{11} & 0
\end{array}
\right)
\left(\begin{array}{c}
\sigma\\
\mu^1\\
\rho
\end{array}\right)
\end{equation}
where $Q_{11}$ is given by \cref{eqn:Cartan-umbilical-tensor}, and (cf.\ \cite{CE2018-obstruction-flatI})
\begin{equation}\label{eqn:Y1}
Y_1 = -i\nabla^1 Q_{11}.
\end{equation}
The CR invariance of $Q_{11}$ then follows immediately from the CR invariance of $\kappa_{10 A}{}^B$ and the transformation law \cref{TractorTransformation}. On the other hand, $Y_1$ is not CR invariant, rather the transformation law \cref{TractorTransformation} implies that if $\thetah = e^{\Ups}\theta$ then $\hat{Y}_1 = Y_1 - Q_{11}\Upsilon^1$. (The pair $Q_{11}$ and $Y_1$, respectively, are highly analogous to the Weyl curvature and Cotton tensor in $4$-dimensional conformal geometry.) Since the tractor connection preserves the tractor metric we have $\kappa_{\oneb 0 A}{}^B = - h_{A\bar{D}} h^{B\bar{C}} \overline{\kappa_{10 C}{}^D}$, giving
\begin{equation}\label{eqn:tractor-curvature-oneb0}
\ka_{\oneb 0 A}{}^B v^A \overset{\theta}{=} \left(
\begin{array}{ccc}
0 & 0 & 0\\
iQ_{\oneb}{}^1 & 0 & 0 \\
-Y_{\oneb} & 0 & 0
\end{array}
\right)
\left(\begin{array}{c}
\sigma\\
\mu^1\\
\rho
\end{array}\right)
\end{equation}
where $Y_{\oneb} = \overline{Y_1}$.

\subsection{The adjoint tractor bundle and the obstruction as a divergence}
Let $\mathbb{C}^{2,1}$ denote $\mathbb{C}^3$ equipped with the signature $(2,1)$ Hermitian inner product
\begin{equation*}
\langle (z_0,z_1,z_2), (w_0,w_1,w_2)\rangle = z_0\overline{w_2} + z_1\overline{w_1} + z_2\overline{w_0}
\end{equation*}
chosen so that the standard first and last basis vectors are isotropic. Let $G=\mathrm{SU}(2,1)$ be the linear group preserving the inner product, with Lie algebra
\begin{equation*}
\mathfrak{su}(2,1)=\left\{
\left(
\begin{array}{ccc}
a & z & i\phi\\
w & -2i\,\mathrm{Im}\, a  & -\bar{z} \\
i\psi & -\bar{w} & -\bar{a}
\end{array}
\right) : \phi,\psi\in \mathbb{R},\; a,z,w\in \mathbb{C} \right\}.
\end{equation*}

The adjoint tractor bundle is the bundle induced from $\cG$ by the adjoint representation of $G$ on its Lie algebra $\mathbb{V}=\fg$. Since $\fg$ consists of the trace-free skew-Hermitian endomorphisms of $\mathbb{C}^{2,1}$, the adjoint tractor bundle $\cA\to M$ is the subbundle of $\mathrm{End}(\cT)$ consisting of trace-free skew-Hermitian endomorphisms of $\cT$. A section $s\in \Gamma(\cA)$ may be written with respect to a choice of contact form $\theta$ as
\begin{equation*}
 s_A{}^B \overset{\theta}{=}\left(\begin{array}{ccc}
\mu & \upsilon_1 & iu\\
\nu^1 & -2i \mathrm{Im}\mu& -\upsilon^1 \\
i\lambda & -\nu_1 & -\overline{\mu}
\end{array}\right).
\end{equation*}
If $\thetah=e^{\Ups}\theta$ then
\begin{equation*}
 s_A{}^B \overset{\thetah}{=}
\left(\!
\begin{array}{ccc}
1 & 0 & 0\\
\Ups^{1} & 1 & 0 \\
-\frac{1}{2}(\Ups^{1}\Ups_{1}-i\Ups_0) & - \Ups_{1} & 1
\end{array}
\right) \!
 \left(\begin{array}{ccc}
\mu & \upsilon_1 & iu\\
\nu^1 & -2i \mathrm{Im}\mu& -\upsilon^1 \\
i\lambda & -\nu_1 & -\overline{\mu}
\end{array}\right)\!
\left(\!
\begin{array}{ccc}
1 & 0 & 0\\
-\Ups^{1} & 1 & 0 \\
-\frac{1}{2}(\Ups^{1}\Ups_{1}+i\Ups_0) & \Ups_{1} & 1
\end{array}
\right).
\end{equation*}


The tractor curvature $\kappa$ satisfies the Bianchi identity, $\dee^{\nabla}\kappa=0$, which can be written in terms of the components as $\nabla_{1}\kappa_{\oneb 0A}{}^B - \nabla_{\oneb} \kappa_{10A}{}^B = 0$, i.e.\ 
$$
\nabla^1\kappa_{10A}{}^B = \nabla^{\oneb}\kappa_{\oneb 0A}{}^B.
$$ 
\begin{lemma}\cite[Lemma 7.1]{CE2018-obstruction-flatI}\label{lem:obstruction-flat-as-divergence}
Let $(M,H,J)$ be a CR $3$-manifold. Then the CR obstruction density $\mathcal{O}$ vanishes if and only if $\nabla^1\kappa_{10A}{}^B=0$ (equivalently $\nabla^{\oneb}\kappa_{\oneb 0A}{}^B=0$).
\end{lemma}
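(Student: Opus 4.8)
The plan is to compute the endomorphism-valued density $\nabla^1\kappa_{10A}{}^B$ directly, using the formula \cref{eqn:tractor-curvature} for $\kappa_{10A}{}^B$ and the tractor connection \cref{TractorConnection1b}, and to show that the only surviving component is a fixed nonzero multiple of $\mathcal{O}$. Both directions of the lemma, together with the parenthetical equivalence with $\nabla^{\oneb}\kappa_{\oneb 0A}{}^B=0$ (which is just the tractor Bianchi identity recalled above), are then immediate. First I would fix a contact form $\theta$, so that $\kappa_{10A}{}^B$ is represented by the matrix $\left(\begin{smallmatrix}0&0&0\\0&0&0\\Y_1 & iQ_{11} & 0\end{smallmatrix}\right)$ of \cref{eqn:tractor-curvature}, and let $\gamma_{\oneb}$ be the zeroth-order (``Christoffel'') part of the tractor connection in the direction $Z_{\oneb}$, read off from \cref{TractorConnection1b}. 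The structural observation that keeps this short is that $\gamma_{\oneb}$ maps nothing into the bottom line $\cT^1\subset\cT$ except the identity on $\rho$ (equivalently, the $\rho$-column of $\gamma_{\oneb}$ vanishes), so $\gamma_{\oneb}$ annihilates $\kappa_{10A}{}^B$ under left multiplication and the Leibniz rule collapses to
\[
\nabla_{\oneb}\kappa_{10A}{}^B \;=\; \widetilde{\nabla}_{\oneb}\kappa_{10A}{}^B \;-\; \kappa_{10A}{}^C\,(\gamma_{\oneb})_C{}^B,
\]
where $\widetilde{\nabla}_{\oneb}$ differentiates the (weighted) entries $Y_1$, $Q_{11}$ together with the form indices they carry. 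Both terms remain in the bottom row, so $\nabla^1\kappa_{10A}{}^B$ does too.

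I would then assemble this bottom row after contracting the derivative index against the holomorphic form index. In the $\mu\mapsto\rho$ slot one obtains $i\nabla^1 Q_{11}$ from $\widetilde{\nabla}_{\oneb}$ and a term $-Y_1$ from $-\kappa_{10}\cdot\gamma_{\oneb}$ (coming from the $-\mu_{\oneb}$ in the first line of \cref{TractorConnection1b}); these cancel precisely because $Y_1=-i\nabla^1 Q_{11}$ by \cref{eqn:Y1}. I regard this cancellation as the crux: it is the tractor-level statement that the Cotton-type density $Y_1$ is itself the divergence of the Cartan tensor $Q_{11}$, and it is what forces $\nabla^1\kappa_{10A}{}^B$ into the deepest filtration slot of the adjoint tractor bundle---the very slot in which $\mathcal{O}$ naturally lives. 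The remaining, $\sigma\mapsto\rho$, slot collects $\nabla^1 Y_1$ from $\widetilde{\nabla}_{\oneb}$ together with a torsion term $-A^{11}Q_{11}$ from $-\kappa_{10}\cdot\gamma_{\oneb}$ (coming from the $-iA_{\oneb}{}^1\sigma$ in the second line of \cref{TractorConnection1b}); using $Y_1=-i\nabla^1 Q_{11}$ once more together with the definition \cref{eqn:obstruction-density} of $\mathcal{O}$, this equals $\nabla^1 Y_1 - A^{11}Q_{11} = -i\nabla^1\nabla^1 Q_{11} - A^{11}Q_{11} = -3i\,\mathcal{O}$. So the computation should produce
\[
\nabla^1\kappa_{10A}{}^B \overset{\theta}{=} \begin{pmatrix} 0 & 0 & 0 \\ 0 & 0 & 0 \\ -3i\,\mathcal{O} & 0 & 0 \end{pmatrix},
\]
which is independent of $\theta$ and consistent with the adjoint-tractor structure, since $\mathcal{O}$ is real by \cref{eqn:Q11-Bianchi} and this is a real slot. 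As $-3i\neq 0$, it follows that $\nabla^1\kappa_{10A}{}^B=0$ if and only if $\mathcal{O}=0$.

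The genuine work is the bookkeeping in the second paragraph: matching the torsion contribution of $-\kappa_{10}\cdot\gamma_{\oneb}$ against the $-iA^{11}Q_{11}$ term in \cref{eqn:obstruction-density}, and verifying that no term survives off the bottom row---equivalently, that the $\mu\mapsto\rho$ component vanishes identically. I expect this to be the main but routine obstacle; it is short once one uses that in dimension $3$ the iterated divergences $\nabla^1\nabla^1$ may be evaluated in either order and that \cref{eqn:commuting-11bar-derivatives-on-densities,eqn:commuting-10-derivatives-on-densities} govern any reordering against the density weights. (A more invariant alternative: by the tractor Bianchi identity and the CR invariance of the surviving curvature components $\kappa_{10},\kappa_{\oneb0}$, the density $\nabla^1\kappa_{10A}{}^B$ is a CR invariant valued in that deepest slot and linear in $Q_{11}$ and its covariant derivatives, hence a universal multiple of $\mathcal{O}$; one would still compute a single component to see the multiple is nonzero.)
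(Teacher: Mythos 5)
Your proof is correct and follows essentially the same route as the paper, which simply asserts that a direct calculation puts $\nabla^1\kappa_{10A}{}^B$ entirely in the $\sigma\mapsto\rho$ slot with entry $-i(\nabla^1\nabla^1Q_{11}-iA^{11}Q_{11})=-3i\,\mathcal{O}$; your value $\nabla^1 Y_1 - A^{11}Q_{11}$ for that slot agrees with this exactly. One small sign slip: in the $\mu\mapsto\rho$ slot the connection term contributes $+Y_1$ rather than $-Y_1$, which is what cancels the $i\nabla^1 Q_{11}=-Y_1$ coming from $\widetilde{\nabla}_{\oneb}$ (as literally written your two contributions would sum to $-2Y_1$), but the cancellation you assert is genuine and the conclusion stands.
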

\begin{proof}
Fix a background contact form $\theta$. A direct calculation using \cref{eqn:Y1} shows that 
\begin{equation*}
\nabla^1\kappa_{10A}{}^B \overset{\theta}{=} 
\left(\begin{array}{ccc}
0 & 0 & 0\\
0 & 0 & 0 \\
-i(\nabla^1\nabla^1Q_{11} - iA^{11}Q_{11}) & 0 & 0
\end{array}\right).
\end{equation*}
The lemma follows immediately by \cref{eqn:obstruction-density}.
\end{proof}

\subsection{Solving a prolonged system of $\partial_b$-equations}

Let $(M,H,J)$ be a CR $3$-manifold and suppose $u$ is a real weight $(1,1)$ density solving the infinitesimal automorphism equation $\nabla_1\nabla_1 u + iA_{11} u = 0$. Then there exists a section $s_A{}^B$ of the adjoint tractor bundle with $s_A{}^B\boldsymbol{Z}^A\boldsymbol{Z}_B = iu$ satisfying $\nabla_1 s_A{}^B = u\kappa_{10A}{}^B$ and $\nabla_{\oneb} s_A{}^B = u\kappa_{\oneb 0A}{}^B$ \cite{Cap2008,CE2018-obstruction-flatI}. The following gives a holomorphic generalization of this result.
\begin{proposition}\label{prop:holomorphic-prolonged-system}
Suppose $u$ is a complex solution of the CR invariant equation $\nabla_1\nabla_1 u + iA_{11} u =0$ on weight $(1,1)$ densities. Then there exist trace free sections $s_A{}^B$ of $\cE_A{}^B$ with $s_A{}^B\boldsymbol{Z}^A\boldsymbol{Z}_B = iu$ satisfying $\nabla_1 s_A{}^B = u\kappa_{10A}{}^B \mod \boldsymbol{Z}_A\boldsymbol{Z}^B$. 
\end{proposition}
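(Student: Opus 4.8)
The plan is to run the adjoint-tractor prolongation of the equation $\nabla_1\nabla_1 u + iA_{11}u = 0$ exactly as in \cite{Cap2008,CE2018-obstruction-flatI} for real $u$, while tracking the two places where that argument uses reality: the companion equation $\nabla_{\oneb}s_A{}^B = u\kappa_{\oneb 0 A}{}^B$ and the skew-Hermitian symmetry of $s$. Dropping both will cost us only the $\boldsymbol{Z}_A\boldsymbol{Z}^B$-component of the equation $\nabla_1 s_A{}^B = u\kappa_{10 A}{}^B$, which is exactly the weakening in the statement.

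Fix a contact form $\theta$, so that a section $s_A{}^B$ of $\cE_A{}^B$ is a $3\times 3$ matrix adapted to the filtration $\cT^1\subset\cT^0\subset\cT$, and we look for one that is trace free. We prescribe its lowest-weight component by $s_A{}^B\boldsymbol{Z}^A\boldsymbol{Z}_B = iu$, impose tracelessness, and leave the $\boldsymbol{Z}_A\boldsymbol{Z}^B$-component undetermined for the moment. The remaining components are then fixed one filtration step at a time: using the tractor connection formulas \cref{TractorConnection1}, the density-bundle curvature identities \cref{eqn:commuting-11bar-derivatives-on-densities,eqn:commuting-10-derivatives-on-densities}, and the definitions \cref{eqn:T_1,eqn:S,eqn:Cartan-umbilical-tensor,eqn:Y1}, one computes $\nabla_1 s_A{}^B$ and demands that $\nabla_1 s_A{}^B - u\kappa_{10 A}{}^B$ vanish in the next slot up. At each step below the top this is an algebraic equation for the next entry of $s$ whose leading term — the relevant piece of the algebraic part of \cref{TractorConnection1} — is invertible on the component being solved for (the low-dimensional, by-hand instance of the Kostant/BGG cohomology computation), so that entry is determined as a CR-covariant differential expression in $u$ and its $\nabla_1$- and $\nabla_0$-derivatives. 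The only differential constraint on $u$ needed for this to be consistent is $\nabla_1\nabla_1 u + iA_{11}u = 0$ (and identities obtained by differentiating it, together with the structural relations of \cref{sec:pseudohermitian-calculus}); no step uses the conjugate equation $\nabla_{\oneb}\nabla_{\oneb}u - iA_{\oneb\oneb}u = 0$, which is automatic when $u$ is real but a genuine extra hypothesis otherwise, and this is why only $\nabla_1 s$, not $\nabla_{\oneb}s$, will be controlled.

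The top of the filtration is where the obstruction $\boldsymbol{Z}_A\boldsymbol{Z}^B$ enters. Once the lower entries (and, from the last algebraic step, the $\boldsymbol{Z}_A\boldsymbol{Z}^B$-component of $s$ itself) are fixed, the one requirement that remains — matching the $Y_1$-term of $u\kappa_{10 A}{}^B$ — sits in the $\boldsymbol{Z}_A\boldsymbol{Z}^B$-slot of $\nabla_1 s_A{}^B - u\kappa_{10 A}{}^B$ and is no longer algebraic: it has become a first-order differential identity in the entries already determined. In the real case it is forced by the companion equation $\nabla_{\oneb}s_A{}^B = u\kappa_{\oneb 0 A}{}^B$ together with the tractor Bianchi identity $\nabla^1\kappa_{10 A}{}^B = \nabla^{\oneb}\kappa_{\oneb 0 A}{}^B$; with no conjugate equation at our disposal we cannot close it, so we simply do not impose it. This yields exactly $\nabla_1 s_A{}^B = u\kappa_{10 A}{}^B \bmod \boldsymbol{Z}_A\boldsymbol{Z}^B$, with $s$ trace free and $s_A{}^B\boldsymbol{Z}^A\boldsymbol{Z}_B = iu$ by construction.

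The main work, and the only real obstacle, is the bookkeeping in the second paragraph: checking against \cref{TractorConnection1,eqn:Cartan-umbilical-tensor,eqn:Y1} that every intermediate slot is genuinely solvable from the direction-$1$ equation alone, that the sole compatibility forced along the way is $\nabla_1\nabla_1 u + iA_{11}u = 0$, and that the unmatched term is confined to the $\boldsymbol{Z}_A\boldsymbol{Z}^B$-slot rather than leaking into higher ones. For geometric orientation: the complex field $V = fT + if^1 Z_1$ with $f = \bar u$ satisfies $\mathcal{L}_V\Gamma(T^{0,1}M)\subset\Gamma(T^{0,1}M)$ by \cref{lem:Lie_V-of-Z_1} (since $\nabla_{\oneb}\nabla_{\oneb}f - iA_{\oneb\oneb}f$ is the conjugate of $\nabla_1\nabla_1 u + iA_{11}u$), but when $u$ is not real it carries no control on $\mathcal{L}_V\Gamma(T^{1,0}M)$ — which is the reason the prolongation can only be ``half-parallel''.
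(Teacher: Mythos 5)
Your overall strategy coincides with the paper's: build $s_A{}^B$ entry by entry as in the real case and absorb the one equation that cannot be closed into the $\boldsymbol{Z}_A\boldsymbol{Z}^B$-slot. But the proposal mislocates where the difficulty lives, and the step you defer as ``bookkeeping'' is the entire content of the proposition. Writing $s_A{}^B$ with entries $\mu,\upsilon_1,iu,\nu^1,\chi-\mu,-\xi^1,i\lambda,-\eta_1,-\chi$ and comparing $\nabla_1 s_A{}^B$ with $u\kappa_{10A}{}^B$ slot by slot, only four of the slots are algebraic in a new unknown: they determine $\upsilon_1=i\nabla_1u$, then $\chi$, then $\eta_1$, then $\lambda$. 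The remaining slots are \emph{differential consistency conditions}, not algebraic determinations: one is the hypothesis $\nabla_1\nabla_1u+iA_{11}u=0$ itself (fine), one forces $\nabla_1\mu=\tfrac14R\upsilon_1-iuT_1$, and one — the slot carrying $iQ_{11}$ in $\kappa_{10A}{}^B$ — must be shown to equal $iuQ_{11}$ exactly. That last slot is \emph{not} the $\boldsymbol{Z}_A\boldsymbol{Z}^B$-slot (which is the $Y_1$-slot), so it is not removed by working $\bmod\ \boldsymbol{Z}_A\boldsymbol{Z}^B$, and your appeal to invertibility of the algebraic part of the connection does not apply to it. In the real case these consistencies come for free from the general prolongation theory because one has the \emph{full} system ($\nabla_1 s=u\kappa_{10}$ and $\nabla_{\oneb}s=u\kappa_{\oneb 0}$ with $s$ skew-Hermitian); for complex $u$ satisfying only half the system there is no such theorem, and the identities must be verified by hand.

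Concretely, the paper chooses $\mu=\tfrac13\bigl(\nabla_0u-\nabla^1\upsilon_1-\tfrac{i}{4}uR\bigr)$ and $\xi^1=-i\nabla^1u$ — note both involve $\nabla^1u=\bh^{1\oneb}\nabla_{\oneb}u$, contradicting your claim that every entry is an expression in $u$ and its $\nabla_1$- and $\nabla_0$-derivatives alone — and then the bulk of the proof is a computation with the commutation formulae \cref{eqn:Ricci-identity,eqn:Tanaka-Webster-commuting-10-derivatives,eqn:commuting-11bar-derivatives-on-densities,eqn:commuting-10-derivatives-on-densities}, repeatedly substituting $\nabla_1\nabla_1u=-iA_{11}u$, showing that the $Q_{11}$-slot reduces to $u\bigl(-\tfrac{i}{6}\nabla_1^2R+\tfrac12A_{11}R+i\nabla_0A_{11}-\tfrac23\nabla_1\nabla^1A_{11}\bigr)=iuQ_{11}$. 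Until that verification is carried out, the assertion that ``the sole compatibility forced along the way is $\nabla_1\nabla_1u+iA_{11}u=0$'' is a restatement of the proposition, not a proof of it.
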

\begin{remark}
The sections $s_A{}^B$ given by this proposition depend on one free parameter, namely a section $\nu^1$ of $\cE^1(-1,-1)$. We expect that for a specific choice of $\nu^1$ the section $s_A{}^B$ constructed in the proof indeed satisfies $\nabla_1 s_A{}^B = u\kappa_{10A}{}^B$ without the need to mod out by $\boldsymbol{Z}_A\boldsymbol{Z}^B$. This would require a substantial calculation however, and the precise equality is not needed for our main results.
\end{remark}
\begin{proof}
We shall mimic what happens in the case where $u$ is real. We start by setting by setting $s_A{}^B\boldsymbol{Z}^A\boldsymbol{Z}_B = iu$, so that
\begin{equation*}
 s_A{}^B \overset{\theta}{=}\left(\begin{array}{ccc}
\mu & \upsilon_1 & iu\\
\nu^1 & \chi-\mu & -\xi^1 \\
i\lambda & -\eta_1 & -\chi
\end{array}\right).
\end{equation*}
Note that if $s_A{}^B$ were to be an adjoint tractor, then $u$ and $\lambda$ would be real, and $\xi_{\oneb}$, $\chi$, $\eta^{\oneb}$ would be $\overline{\upsilon_1}$, $\overline{\mu}$, $\overline{\nu^1}$ respectively. If $s_A{}^B$ is given as above then, with respect to $\theta$, $\nabla_1 s_A{}^B$ equals
\begin{equation*}
\footnotesize{
\left(\begin{array}{ccc}
\nabla_1\mu - \frac{1}{4}R\ups_1 + iuT_1 & \nabla_1\ups_1 - A_{11}u & i\nabla_1 u - \ups_1\\
\nabla_1\nu^1 + i\lambda + \frac{1}{4}R(2\mu-\chi)-\xi^1T_1 & * & -\nabla_1\xi^1 - (2\chi -\mu) + \frac{i}{4}Ru\\
i\nabla_1\lambda - iA_{11}\nu^1 +\frac{1}{4}R\eta_1 -(\mu+\chi)T_1 & -\nabla_1\eta_1 - i (2\chi - \mu)A_{11} - \upsilon_1 T_1 & -\nabla_1\chi + \eta_1 + iA_{11}\xi^1 - iuT_1
\end{array}\right)
}
\end{equation*}
where the entry marked $*$ is determined by the trace-freeness. In order for the top right entry $i\nabla_1 u - \ups_1$ to vanish we must choose $\upsilon_1 = i\nabla_1 u$. The fact that $u$ solves the holomorphic infinitesimal automorphism equation $\nabla_1\nabla_1 u + iA_{11} u =0$ then gives us that $\nabla_1\ups_1 - A_{11}u=0$. As in the case where $u$ is real \cite{CE2018-obstruction-flatI} we define $\mu$ by $\mu=\frac{1}{3}(\nabla_0 u - \nabla^1\ups_1 -\frac{i}{4}uR )$. A straightforward calculation, which is formally the same as in the case where $u$ is real, gives that $\nabla_1\mu - \frac{1}{4}R\ups_1 + iuT_1=0$. We fix $\chi$ and $\eta_1$ by setting $-\nabla_1\xi^1 - (2\chi -\mu) + \frac{i}{4}Ru=0$ and $-\nabla_1\chi + \eta_1 + iA_{11}\xi^1 - iuT_1=0$. Using that $2\chi -\mu= -\nabla_1\xi^1+ \frac{i}{4}Ru$ and $\eta_1 = \nabla_1\chi - iA_{11}\xi^1 + iuT_1$, where $\xi^1$ is as yet undetermined, the bottom middle entry becomes
\begin{multline}\label{eq:basic-equation}
\frac{1}{2}\nabla_1^3\xi^1+2i A_{11}\nabla_1\xi^1+i\xi^1\nabla_1A_{11}-\frac{i}{8}(u\nabla_1^2R+
2\nabla_1R\, \nabla_1u+R\, \nabla_1^2u)\\-\frac{1}{6}\left (\nabla_1^2\nabla_0u-i\nabla_1^2\nabla^1\nabla_1 u
-\frac{i}{4}(u\nabla_1^2R+2\nabla_1R\, \nabla_1u+R\, \nabla_1^2u)\right)\\ -\frac{i}{12}u(\nabla_1^2R -4i\nabla_1\nabla^1 A_{11})+\frac{1}{4}A_{11}Ru-\frac{i}{6}\nabla_1u(\nabla_1R-4i\nabla^1A_{11})
\end{multline}
where $\nabla_1^k$ means $\nabla_1$ to the $k$th power. We need to show that $\xi^1$ can be chosen such that \cref{eq:basic-equation} is equal to $iuQ_{11}$. We first simplify the part of this expression which does not involve $\xi^1$. The idea is to commute derivatives in the expression $\nabla_1^2\nabla_0u-i\nabla_1^2\nabla^1\nabla_1 u$ to obtain terms involving $\nabla_1^2 u$, and use that $\nabla_1^2 u = -iA_{11} u$. Using the commutation formulae \cref{eqn:Ricci-identity,eqn:Tanaka-Webster-commuting-10-derivatives,eqn:commuting-11bar-derivatives-on-densities,eqn:commuting-10-derivatives-on-densities} we have
\begin{equation*}
\nabla_1^2\nabla^1\nabla_1u=\nabla_1\nabla^1\nabla_1^2u-i\nabla_0\nabla_1^2u-iA_{11}\nabla^1\nabla_1u +
i\nabla^1A_{11}\nabla_1u -\nabla_1 R\nabla_1u-R\,\nabla_1^2u,
\end{equation*}
and
\begin{equation*}
\nabla_1^2\nabla_0u=\nabla_0\nabla_1^2u+(\nabla_1\nabla^1u+\nabla^1\nabla_1u)A_{11}
+\nabla^1u\nabla_1 A_{11}-\nabla_1u\nabla^1A_{11}.
\end{equation*}
Thus,
\begin{equation*}
\nabla_1^2\nabla_0u-i\nabla_1^2\nabla^1\nabla_1 u=-i\nabla_1\nabla^1\nabla_1^2u+A_{11} \nabla_1\nabla^1u\\+\nabla^1u\nabla_1A_{11}+ iR\, \nabla_1^2u  +i\nabla_1R\,\nabla_1u.
\end{equation*}
Substituting $\nabla_1^2u=-iA_{11}u$, we obtain 
\begin{equation}\label{eq:finalcomm}
\nabla_1^2\nabla_0u-i\nabla_1^2\nabla^1\nabla_1 u=-\nabla_1u \nabla^1 A_{11} -u\nabla_1\nabla^1A_{11}+ R\, A_{11}u  +i\nabla_1R\,\nabla_1u.
\end{equation}
Substituting \cref{eq:finalcomm} and $\nabla_1^2u=-iA_{11}u$, \cref{eq:basic-equation} becomes
\begin{multline}\label{eq:basic-equation-2}
\frac{1}{2}\nabla_1^3\xi^1+2i A_{11}\nabla_1\xi^1+i\xi^1\nabla_1A_{11} - \frac{i}{6} u \nabla_1^2 R -\frac{i}{2}\nabla_1 R\nabla_1 u -\frac{1}{2}\nabla_1u\nabla^1A_{11} -\frac{1}{6}u\nabla_1\nabla^1A_{11}.
\end{multline}
Again using the case where $u$ is real as a guide, we set $\xi^1 = -i\nabla^1 u$. Let $L\xi^1$ denote $\frac{1}{2}\nabla_1^3\xi^1+2i A_{11}\nabla_1\xi^1+i\xi^1\nabla_1A_{11}$. Then
\begin{equation*}
L\xi^1=\frac{1}{2}\nabla_1^3\xi^1+2i A_{11}\nabla_1\xi^1+i\xi^1\nabla_1A_{11}=-\frac{i}{2} \nabla_1^3\nabla^1u+2A_{11}\nabla_1\nabla^1 u +\nabla^1u\nabla_1 A_{11}.
\end{equation*}
We have also have the commutation formula
\begin{multline*}
\nabla_1^3\nabla^1u=\nabla_1\nabla^1\nabla_1^2u-2i\nabla_0\nabla_1^2u -i A_{11} \nabla_1\nabla^1u\\-2i A_{11} \nabla^1\nabla_1u -R\, \nabla_1^2u  -\nabla_1R\,\nabla_1u
+2i \nabla_1u\nabla^1A_{11}-i \nabla^1u\nabla_1A_{11}.
\end{multline*}
which yields
\begin{multline*}
L\xi^1=-\frac{i}{2}\nabla_1\nabla^1\nabla_1^2u-\nabla_0\nabla_1^2u - A_{11} \nabla^1\nabla_1u +\frac{i}{2}R\, \nabla_1^2u \\ +\frac{i}{2}\nabla_1R\,\nabla_1u
+ \nabla_1u\nabla^1A_{11}+\frac{3}{2}A_{11}\nabla_1\nabla^1 u +\frac{1}{2}\nabla^1u\nabla_1 A_{11}.
\end{multline*}
Substituting $\nabla_1^2u=-iA_{11}u$, we obtain
\begin{multline*}
L\xi^1=-\frac{1}{2}\nabla_1\nabla^1(A_{11}u)+i\nabla_0(A_{11}u) - A_{11} \nabla^1\nabla_1u +\frac{1}{2}R\, A_{11}u \\ +\frac{i}{2}\nabla_1R\,\nabla_1u
+ \nabla_1u\nabla^1A_{11}+\frac{3}{2}A_{11}\nabla_1\nabla^1 u +\frac{1}{2}\nabla^1u\nabla_1 A_{11},
\end{multline*}
so
\begin{multline*}
L\xi^1=-\frac{1}{2}u\nabla_1\nabla^1A_{11}+iu\nabla_0A_{11}+iA_{11}\nabla_0u \\+A_{11}\nabla_1\nabla^1 u - A_{11} \nabla^1\nabla_1u +\frac{1}{2}R\, A_{11}u  +\frac{i}{2}\nabla_1R\,\nabla_1u
+ \frac{1}{2}\nabla_1u\nabla^1A_{11}.
\end{multline*}
Since $\nabla_1\nabla^1u-\nabla^1\nabla_1u=-i\nabla_0u$, we obtain
\begin{equation}\label{eq:L0xi1-2}
L\xi^1=-\frac{1}{2}u\nabla_1\nabla^1A_{11}+iu\nabla_0A_{11} +\frac{1}{2}R\, A_{11}u  +\frac{i}{2}\nabla_1R\,\nabla_1u
+ \frac{1}{2}\nabla_1u\nabla^1A_{11}.
\end{equation}
Substituting \eqref{eq:L0xi1-2} into \eqref{eq:basic-equation-2} we obtain
\begin{equation}
u\left(-\frac{i}{6}\nabla_1^2R +\frac{1}{2}A_{11}R +i\nabla_0A_{11}-\frac{2}{3}\nabla_1\nabla^1 A_{11}\right),
\end{equation}
which equals $iuQ_{11}$. We now choose $\lambda$ so that $\nabla_1\nu^1 + i\lambda + \frac{1}{4}R(2\mu-\chi)-\xi^1T_1=0$, where $\nu_1$ is arbitrary. Since (noting the form of the tractor metric \cref{eqn:tractor-metric})
$$
\boldsymbol{Z}_A\boldsymbol{Z}^B \overset{\theta}{=} 
\left(\begin{array}{ccc}
0 & 0 & 0\\
0 & 0 & 0 \\
1 & 0 & 0
\end{array}\right)
$$
the result follows.

\end{proof}

\begin{remark}\label{rem:holomorphic-sections}
As a side remark, we note that in the embedded case one can find actual CR holomorphic sections of the complexified adjoint tractor bundle. Recall that a contact form $\theta$ on a CR $3$-manifold $(M,H,J)$ is pseudo-Einstein \cite{FeffermanHirachi2003, CaseYang2013} if it is locally volume normalized by a closed $(2,0)$-form. If $\theta$ is volume normalized with respect to a global closed $(2,0)$-form which admits a global cube root $\sigma$, then by \cite[Proposition 4.6]{CaseGover-arxiv2017} there exists a corresponding global CR holomorphic section of the cotractor bundle $I_A$ with $I_A\boldsymbol{Z}^A=\sigma$. If $(M,H,J)$ is compact and embedded in $\mathbb{C}^2$, then any pseudo-Einstein structure gives rise to such a section $I_A$. In this case, one can easily see that there exist $3$ pointwise linearly independent CR holomorphic cotractors $I_A^1$, $I_A^2$, $I_A^3$ (indeed, it suffices to take the cotractors corresponding to $\sigma$, $e^{f_1}\sigma$, $e^{f_2}\sigma$ where $f_1$ and $f_2$ are suitable linear functions on $\mathbb{C}^2$). One therefore obtains a dual basis of CR holomorphic tractors $J^A_1$, $J^A_2$, $J^A_3$. Combining these one obtains global CR holomorphic sections $I_A^jJ^B_k$ of $\cE_A{}^B=\mathrm{End}(\cT)$ and corresponding CR anti-holomorphic sections ${\hat{s}^{(j,k)}}{}_A{}^B=J_A^jI^B_k$, where $j,k=1,2,3$.
\end{remark}

\section{Proofs of main results} \label{sec:main-technical-theorem}

\begin{proof}[Proof of \cref{thm:main-technical-theorem}]
Fix a contact form $\theta$ and let $\nabla$ denote the Tanaka-Webster connection of $\theta$ coupled with the CR tractor connection. By \cref{lem:obstruction-flat-as-divergence}, since $\mathcal{O}=0$ we have $\nabla^{\oneb}\kappa_{\oneb0A}{}^B=0$. 
If $u$ is a weight $(1,1)$ complex solution of $\nabla_1\nabla_1 u + iA_{11} u = 0$, then by \cref{prop:holomorphic-prolonged-system} there exists a trace free section $s_A{}^B$ of $\cE_A{}^B$ with $s_A{}^B\boldsymbol{Z}^A\boldsymbol{Z}_B=iu$ satisfying $\nabla_1 s_A{}^B = u\kappa_{10A}{}^B + r_1\boldsymbol{Z}_A\boldsymbol{Z}^B$ where $r_1\in \Gamma(\cE_1(-1,-1))$. Note that $s_A{}^{C}s_B{}^A\,\nabla^{\oneb}\kappa_{\oneb 0C}{}^B = s_A{}^C \,(\nabla^{\oneb}\kappa_{\oneb 0B}{}^A) \,s_C{}^B$ is a density of weight $(-2,-2)$ so can be invariantly integrated. Integrating by parts we obtain
\begin{align}\label{eqn:int-by-parts}
0=\int s_A{}^C \,(\nabla^{\oneb}\kappa_{\oneb 0B}{}^A) \,s_C{}^B & = - \int \left( (\nabla^{\oneb}s_A{}^C)\,\kappa_{\oneb 0B}{}^A \,s_C{}^B + s_A{}^C \,\kappa_{\oneb 0B}{}^A \,\nabla^{\oneb} s_C{}^B \right)\\
\nonumber &= - \int u h^{1\oneb} \left( \kappa_{10A}{}^C \,\kappa_{\oneb 0B}{}^A\, \,s_C{}^B + s_A{}^C \,\kappa_{\oneb 0B}{}^A \,\kappa_{10C}{}^B \right)
\end{align}
where we used that $\boldsymbol{Z}_A \kappa_{\oneb 0B}{}^A =0$ and $\kappa_{\oneb 0B}{}^A \boldsymbol{Z}^B=0$. Now by \cref{eqn:tractor-curvature} and \cref{eqn:tractor-curvature-oneb0} we have
\begin{equation*}
h^{1\oneb}\kappa_{\oneb 0B}{}^A \,\kappa_{10C}{}^B  \overset{\theta}{=}
\left(
\begin{array}{ccc}
0 & 0 & 0\\
iQ_{\oneb}{}^1 & 0 & 0 \\
-Y_{\oneb} & 0 & 0
\end{array}
\right)
\left(
\begin{array}{ccc}
0 & 0 & 0\\
0 & 0 & 0 \\
Y^{\oneb} & iQ_{1}{}^{\oneb} & 0
\end{array}
\right)
=\left(
\begin{array}{ccc}
0 & 0 & 0\\
0 & 0 & 0 \\
0 & 0 & 0
\end{array}
\right)
\end{equation*}
and 
\begin{equation*}
h^{1\oneb}\kappa_{10A}{}^C \,\kappa_{\oneb 0B}{}^A  \overset{\theta}{=}
\left(
\begin{array}{ccc}
0 & 0 & 0\\
0 & 0 & 0 \\
Y^{\oneb} & iQ_{1}{}^{\oneb} & 0
\end{array}
\right)
\left(
\begin{array}{ccc}
0 & 0 & 0\\
iQ_{\oneb}{}^1 & 0 & 0 \\
-Y_{\oneb} & 0 & 0
\end{array}
\right)
=\left(
\begin{array}{ccc}
0 & 0 & 0\\
0 & 0 & 0 \\
- |Q|^2 & 0 & 0
\end{array}
\right)
\end{equation*}
where $|Q|^2= Q_{11}Q^{\oneb\oneb}$. Hence \cref{eqn:int-by-parts} simplifies to
\begin{equation*}
\int u h^{1\oneb}\kappa_{10A}{}^C \,\kappa_{\oneb 0B}{}^A\, \,s_C{}^B = 0
\end{equation*}
and we have
\begin{align*}
h^{1\oneb}\kappa_{10A}{}^C \,\kappa_{\oneb 0B}{}^A\, \,s_C{}^B  \,&\overset{\theta}{=}\,
\mathrm{tr} \, \left(
\begin{array}{ccc}
0 & 0 & 0\\
0 & 0 & 0 \\
- |Q|^2 & 0 & 0
\end{array}
\right) \left(\begin{array}{ccc}
\mu & \upsilon_1 & iu\\
\nu^1 & -2i \mathrm{Im}\mu& -\upsilon^1 \\
i\lambda & -\nu_1 & -\overline{\mu}
\end{array}\right) \\
&=\, \mathrm{tr} \left(
\begin{array}{ccc}
0 & 0 & 0\\
0 & 0 & 0 \\
-\mu |Q|^2  & -\ups_1|Q|^2 & -iu|Q|^2
\end{array}
\right) 
= -iu|Q|^2.
\end{align*}
We conclude that $\int u^2|Q|^2 =0$.
\end{proof}

\begin{corollary}\label{cor:main-technical-corollary}
Let $(M,H,J)$ be a compact strictly pseudoconvex CR $3$-manifold which is obstruction flat. For any two weight $(1,1)$ complex solutions $u_1$, $u_2$ of the infinitesimal automorphism equation and any anti-CR function $f$, we have
$$
\int_M f u_1 u_2 |Q|^2 = 0.
$$
\end{corollary}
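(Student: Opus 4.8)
The plan is to deduce this from \cref{thm:main-technical-theorem} by two elementary observations. First, \cref{eqn:infinitesimal-automorphism-equation} is $\mathbb{C}$-linear in $u$, so the sum of two weight $(1,1)$ complex solutions is again such a solution. Applying \cref{thm:main-technical-theorem} to $u_1$, to $u_2$, and to $u_1+u_2$, and expanding
$$
0 = \int_M (u_1+u_2)^2|Q|^2 = \int_M u_1^2|Q|^2 + 2\int_M u_1 u_2 |Q|^2 + \int_M u_2^2|Q|^2,
$$
the outer two integrals vanish and one is left with the polarized identity $\int_M u_1 u_2 |Q|^2 = 0$ for any pair of weight $(1,1)$ complex solutions.

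Second, I would observe that multiplication by an anti-CR function preserves the space of weight $(1,1)$ complex solutions. An anti-CR function $f$ satisfies $\nabla_1 f = 0$ (it is the conjugate of a CR function) and has weight $(0,0)$, so for a weight $(1,1)$ solution $u_1$ the product $fu_1$ again has weight $(1,1)$, and the Leibniz rule together with $\nabla_1 f = 0$ gives $\nabla_1(fu_1) = f\,\nabla_1 u_1$, whence
$$
\nabla_1\nabla_1(fu_1) + iA_{11}(fu_1) = f\bigl(\nabla_1\nabla_1 u_1 + iA_{11}u_1\bigr) = 0.
$$

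Combining these, I would apply the polarized identity from the first step to the pair $fu_1$ and $u_2$, obtaining $\int_M (fu_1)\,u_2\,|Q|^2 = 0$, which is the assertion. (As in the remark following \cref{thm:main-technical-theorem}, the integrand $f u_1 u_2 |Q|^2$ has weight $(1+1+0-4,\,1+1+0-4) = (-2,-2)$ and so is naturally a complex volume form on $M$, so the integral is well defined.) There is no serious obstacle here; the one step deserving a moment's care is the preservation of the solution space under multiplication by $f$, which uses crucially that an anti-CR function is annihilated by $\nabla_1$ rather than by $\nabla_{\oneb}$ — with a CR function in place of $f$ the computation above would acquire the extra terms $(\nabla_1\nabla_1 f)u_1 + 2(\nabla_1 f)(\nabla_1 u_1)$ and the conclusion would fail.
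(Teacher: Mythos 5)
Your proposal is correct and is essentially the paper's own argument: the authors likewise reduce to $f=1$ by noting that $fu_1$ is again a solution (precisely because an anti-CR $f$ satisfies $\nabla_1 f=0$), and then polarize \cref{thm:main-technical-theorem} via the solution $u_1+u_2$. Your write-up just makes the two steps explicit.
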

\begin{proof}
Note that $fu_1$ is also a complex solution of the infinitesimal automorphism equation, so without loss of generality we may assume $f=1$. Since $u_1+u_2$ is also a complex solution, the result follows from \cref{thm:main-technical-theorem}.
\end{proof}

\begin{remark}
Note that in the proof of \cref{thm:main-technical-theorem} it is possible to replace the integrand $s_A{}^C \,(\nabla^{\oneb}\kappa_{\oneb 0B}{}^A) \,s_C{}^B$ by $s_A{}^C \,(\nabla^{\oneb}\kappa_{\oneb 0B}{}^A) \,\hat{s}^{(j,k)}{}_C{}^B$ where $\hat{s}^{(j,k)}{}_C{}^B$ is as in \cref{rem:holomorphic-sections}. This leads directly to a result which turns out to be a special case of \cref{cor:main-technical-corollary}. 
\end{remark}

\begin{proof}[Proof of \cref{thm:main-theorem2}]
Note that the condition $(\mathrm{Im}\,u)^2<(\mathrm{Re}\,u)^2$ almost everywhere is equivalent to $\mathrm{Re}\,u^2 > 0$ almost everywhere. Hence, \cref{thm:main-technical-theorem} implies $|Q|^2=0$ and hence $(M,H,J)$ is locally spherical.
\end{proof}

\bibliographystyle{abbrv}

\newcommand{\noopsort}[1]{}

\end{document}